\def\to{\rightarrow}
\def\EE{\mathbb{E} }
\def\PP{\mathbb{P} }
\def\ZZ{\mathbb{Z}}
\def\G{\mathcal{G}}
\def\GG{\mathbb{G}}
\def\RR{\mathbb{R} }
\newtheorem{theorem}{Theorem}
\newtheorem{corollary}[theorem]{Corollary}
\newtheorem{lemma}[theorem]{Lemma}
\newenvironment{proof}[1][Proof]{\noindent\textbf{#1.} }{\ \rule{0.5em}{0.5em}}
\begin{document}
\title{Weak Convergence of Stationary Empirical Processes}
\author{Dragan Radulovi\'c \\
{\small Department of Mathematics, Florida Atlantic University}\\
Marten Wegkamp\\
{\small Department of Mathematics \& Department of Statistical Science,
Cornell University}}
\date{\today}
\maketitle
 
\begin{abstract}
\noindent
We offer an umbrella type result  which extends   weak convergence
of classical empirical processes on the line to that of more general processes indexed by
functions of bounded variation. This extension is not contingent on the type
of dependence of the underlying sequence of random variables.
 As a consequence we establish  weak convergence for
stationary empirical processes indexed by general classes of functions under
$\alpha$-mixing conditions.\\

\noindent {\bf Running title}: Weak convergence of stationary empirical processes
\newline
\noindent {\bf MSC2000 Subject classification}: {Primary 60F17; secondary 60G99.}
\newline
\noindent {\bf Keywords and phrases}: {Integration by parts, bounded variation, empirical processes,
stationary distributions, weak convergence.}
\end{abstract}
\newpage

\section{\protect\bigskip Introduction}

We consider the empirical process 
\begin{equation}
\bar {\mathbb{Z}}_{n}(g) = \frac{1}{\sqrt{n}}
\sum_{i=1}^{n} \left\{ g(X_{i})-\mathbb{E} g(X_{i})\right\},\ {g\in \mathcal{G}},  \label{Zg}
\end{equation}%
indexed by  some  class  $\mathcal{G}$ of functions $g:\RR\to\RR$. It is 
 an obvious generalization of the classical process 
\begin{equation}
\mathbb{G}_{n}(t)=\frac{1}{\sqrt{n}} \sum_{i=1}^{n} \left\{ 1_{(-\infty ,t]}(X_{i})-F(t)\right\},
\label{Gt}
\end{equation}
in which case the indexing class is $\mathcal{G}=\left\{ g(x)=1_{(-\infty ,t]}(x)
:\ t\in \mathbb{R}\right\}$.
 If the underlying sequence $\{X_{i}\}_{i\ge1}$ is i.i.d.,
then the limiting behavior (as $n\to\infty$) of the empirical processes $\{\bar{\mathbb{Z}}_{n}(g),\ g\in\G\}$ is
well  understood. The same
is true for the bootstrap counterpart $\{\bar{\mathbb{Z}}_{n}^{\ast }(g),\ g\in\G\}$ based on an
i.i.d. bootstrap sample $\{X_{1,n}^{\ast },\ldots,X_{m_n,n}^\ast\}$, see, for instance, Van der Vaart and Wellner
(1996) and Dudley (1999). The theory of weak convergence of  empirical processes based on  independent
sequences has yielded a wealth of statistical applications and, in
particular, it was  instrumental for establishing the weak convergence
of numerous novel statistics. Often the limiting distributions of these
statistics do not allow for closed form solutions, in which case the
bootstrap version of the process  is
utilized. As is standard practice nowadays, see the thorough exposition in the first chapter of Van der Vaart and Wellner (1996),
weak convergence in this paper should be understood in the sense
of Hoffmann-J{\o}rgensen and expectations should be interpreted as outer expectations.\\

For empirical processes based on  stationary sequences $\{X_{i}\}_{i\ge1}$ the situation is rather different. The
classical process $\{\mathbb{G}_{n}(t),\, t\in\RR\}$ has been treated by numerous authors, who
established weak convergence under sharp
mixing assumptions, see, for instance, Rio (2000). However, nothing similar exists for more general
processes. The only work that we could find in the literature, Andrews
and Pollard (1994), treats more general indexing classes, but imposes rather
restrictive assumptions on the decay of $\alpha$-mixing coefficients. This
discrepancy between the conditions needed for $\bar{\mathbb{Z}}_{n}(g)$ and those for $
\mathbb{G}_{n}(t)$, is due to fact that the typical approach for proving the
uniform limiting theorems heavily relies on the estimation of entropy
numbers, which in turn require good exponential maximal inequalities. Only $\beta$-mixing, via decoupling, allows for such an estimate. This is the reason why one can find
in the literature  the treatment of $\bar{\mathbb{Z}}_{n}(g)$ only for $
\beta$-mixing sequences, see Arcones and Yu (1994) and Doukhan, Massart and Rio (1995).

The situation for   bootstrap of stationary empirical processes is even worse. Although  introduced more than twenty years ago,  we  only found three papers that
study the  bootstrap for stationary empirical processes indexed by general classes $\mathcal{G}$, see K\"{u}nsch (1989), Liu and Singh (1992)
and   Sengupta,  Volgushev and Shao  (2016). All these results operate under $\beta$-mixing assumptions. Moreover, only in the case of VC-classes do we have the
sharp conditions, see Radulovi\'c (1996). Bracketing classes were considered by
B\"uhlmann (1995), but this was established under   restrictive assumptions
on both $\beta$-mixing coefficients and   bracketing numbers. It is worth
mentioning that the 
covariance function  of the 
limiting Gaussian process is unknown, and consequently in most actual applications of
these results, we  heavily rely on the   bootstrap version of the
process  for which 
adequate results are sorely lacking. In short, the most general $\alpha$-mixing sequences
have never been considered for stationary bootstrap processes $\{\bar{\mathbb{Z}}_{n}^{\ast }(g),\ g\in\G\}$, while for the non-bootstrap version $\{
\bar{\mathbb{Z}}_{n}(g),\ g\in\G\}$ we
have only one example in the literature.\\

In what follows we prove two general results that allow us to extend  
weak convergence of   $\{\mathbb{G}_{n}(t),\ t\in\RR\}$ to that of $
\{\bar{\mathbb{Z}}_{n}(g), {g\in \mathcal{G}}\}$, where $\mathcal{G}$ is a class of
functions of uniformly bounded total variation (called $BV_{T}$ in this paper). We
would like to point out that although there are examples of Donsker classes
of infinite variation (for instance, $f:[0,1]\rightarrow \lbrack 0,1]$ with $
|f(x)-f(y)|\leq |x-y|^{\alpha }$, $1/2<\alpha <1)$, such cases are rather
the exception than the norm. The majority of examples of bounded Donsker
classes that are given in the literature are subsets of the class $BV_{T}$.\\

This enlargement from $\{\mathbb{G}_{n}(t), {t\in \RR}\}$ to $\{\bar{\mathbb{Z}}_{n}(g),\ {g\in \mathcal{G}}\}$ is not contingent on the dependence structure of
underlying sequence $\{X_{i}\}_{i\ge 1}$ (or $\{X_{i}^{\ast }\}_{i\ge 1}$) and the only
requirement is that $\mathbb{G}_{n}(t)$ converges weakly to a Gaussian
process. This allows us to derive  weak convergence  of $\{ \bar{\mathbb{Z}}_{n}(g),\ {g\in \mathcal{G}}\}$ and $\{ \bar{\mathbb{Z}}_{n}^{\ast }(g),\ {g\in \mathcal{G}
}\}$ for $\alpha$-mixing sequences. The same extension applies for the short
memory causal processes considered in Doukhan and Surgailis (1998), as well
as processes treated in Dehling, Durieu and Volny  (2009). The technique we employ is a  
simple application of the integration-by-parts formula, followed by  the continuous mapping theorem. Arguably this approach
may have been known before, although we   could not find any  communication of it in the literature, and   certainly not among the research
related to stationary empirical processes.
A follow-up paper (Radulovi\'c, Wegkamp and Zhao (2017)) extends the results obtained in this work  to classes indexed by {\em multivariate} functions of bounded variation, with an emphasis on empirical copula processes. 
An important technical difference with the follow-up paper is that here  we allow for general stationary distribution functions of the underlying $\{ X_i\}_{i\ge1}$.
The  proof for general, non-continuous processes, is not trivial caused by  technical complications related to the
interplay between the atoms of the limiting process $\ZZ(t)$ and
discontinuities of $g\in \mathcal{G}$.\\
 
The paper is organized as follows. Section 2 contains the statement of the
main result (Theorem 1) and related discussions, while Section 3 presents the  bootstrap version (Theorem 6) of this  result.
The proofs of these two main results are collected in Section 4. For completeness, the well-known
integration by parts formula can be found in the appendix.\\

\section{Main results}

For the total variation norm of a function $g:\mathbb{R}\rightarrow \mathbb{R
}$, we use the notation 
\begin{equation*}
||g||_{TV}=\sup_{\Pi }\sum_{x_{i}, x_{i+1} \in \Pi }|g(x_{i+1})-g(x_{i})|.
\end{equation*}
 The supremum is taken over all countable partitions $\Pi
=\{x_{1}<x_{2}<\ldots \}$ of $\mathbb{R}$. We set, for $T>0$, 
\begin{equation*}
BV_{T}:=\{g:\mathbb{R}\rightarrow \mathbb{R}:\ \Vert g\Vert _{TV}\leq T,
\text{ }\Vert g\Vert _{\infty }\leq T\}.
\end{equation*}
We let $BV_{T}^{\prime }\subset BV_{T}$ be the class of all
 functions $g$ in $BV_T$ that are  right-continuous.
Finally, we let $\{\mathbb{Z}_{n}(t),\ t\in \mathbb{R}\}$ be
an arbitrary stochastic process such that
\begin{itemize}
\item[A1:]
$\lim_{|t|\rightarrow \infty }\mathbb{Z}_{n}(t)=0$;
\item[A2:] \textit{The sample paths of }$\mathbb{Z}_{n}$\textit{\ are
right-continuous and of bounded variation}.
\end{itemize}
Clearly, both requirements A1 and A2 are met for the canonical empirical
process $\{\mathbb{G}_{n}(t),\ t\in\RR\}$. In this paper, we study the limit distribution (as $n\to\infty$) 
of the sequence of processes 
\begin{equation*}
\left\{\, \overline{\mathbb{Z}}_{n}(g):=\int g(x)\,\mathrm{d}\mathbb{Z}_{n}(x),\ \
g\in \mathcal{G}\, \right\}
\end{equation*}%
for some class $\mathcal{G}\subseteq BV_{T}^{\prime }$, for some finite $T$.\\

Although the motivation as well as the most notable applications of our results are related to the canonical case $\mathbb{Z}_{n}(x)=\mathbb{G}_{n}(x)$,
 the actual proof carries over for any process $\mathbb{Z}_{n}$
as long as  assumptions A1 and A2 are satisfied. 

Our main result is the following theorem.

\begin{theorem}\label{theorem 4}
Assume that   $\{\ZZ_{n}(t),\ t\in\RR\}$
converges weakly, as $n\to\infty$, to a  Gaussian process $\{\ZZ(t), \ t\in\RR\}$, that has
uniformly continuous sample paths with respect to the distance $d(s,t)=|F_0(s)-F_0(t)|$ for some distribution function $F_0$.\\
Then,  for any $T<\infty$ and $\G\subseteq BV_T'$,  $\{ \bar{\ZZ}_{n}(g), \ g\in\G\}$   
converges weakly, as $n\to\infty$, to a 
Gaussian process in $\ell^\infty(\G)$, that has uniformly $L_1(F_0)$-continuous sample paths.
\end{theorem}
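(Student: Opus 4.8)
The plan is to reduce the statement to a single application of the continuous mapping theorem, after rewriting $\bar{\ZZ}_n(g)$ as an explicit, Lipschitz functional of the path $t\mapsto\ZZ_n(t)$ by integration by parts. First I would apply the integration-by-parts formula from the appendix to the pair of right-continuous, bounded-variation functions $\ZZ_n$ and $g$. Using A2 to justify the formula and A1 to discard the boundary terms at $\pm\infty$, this yields the identity
\[
\bar{\ZZ}_n(g)=\int g\,\mathrm{d}\ZZ_n=-\int \ZZ_n(x^-)\,\mathrm{d}g(x),\qquad g\in\G,
\]
where $\ZZ_n(x^-)$ denotes the left limit. Thus $\bar{\ZZ}_n=\Phi(\ZZ_n)$ for the map $\Phi$ sending a path $z$ (with left limits) to the element $\Phi(z)\in\ell^\infty(\G)$ defined by $\Phi(z)(g)=-\int z(x^-)\,\mathrm{d}g(x)$, the integral being Lebesgue--Stieltjes against the signed measure $\mathrm{d}g$.

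Second, I would establish that $\Phi$ is Lipschitz from $\ell^\infty(\RR)$ into $\ell^\infty(\G)$. Since $\sup_x|z_1(x^-)-z_2(x^-)|\le\|z_1-z_2\|_\infty$ and $\|g\|_{TV}\le T$ for every $g\in\G\subseteq BV_T'$, linearity of the integral gives
\[
\sup_{g\in\G}\bigl|\Phi(z_1)(g)-\Phi(z_2)(g)\bigr|\le T\,\|z_1-z_2\|_\infty,
\]
so $\Phi$ is continuous at every path of the limit process $\ZZ$. Because $\{\ZZ_n(t)\}$ converges weakly in $\ell^\infty(\RR)$ to $\ZZ$ by hypothesis, the continuous mapping theorem (in the Hoffmann--J{\o}rgensen sense; Van der Vaart and Wellner (1996)) yields that $\bar{\ZZ}_n=\Phi(\ZZ_n)$ converges weakly in $\ell^\infty(\G)$ to $\bar{\ZZ}:=\Phi(\ZZ)$, where $\bar{\ZZ}(g)=-\int\ZZ(x^-)\,\mathrm{d}g(x)$.

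Third, I would identify the limit. For each finite subfamily $g_1,\dots,g_k$, the variable $\bar{\ZZ}(g_j)$ is an almost-sure limit of Riemann--Stieltjes sums $\sum_\ell \ZZ(x_\ell^-)\{g_j(x_{\ell+1})-g_j(x_\ell)\}$, each a finite linear combination of values of the Gaussian process $\ZZ$ and hence jointly Gaussian; since almost-sure limits of Gaussian vectors are Gaussian, $\bar{\ZZ}$ is a Gaussian process. For the $L_1(F_0)$-continuity of its sample paths, I would use that the $d$-uniform continuity forces $\ZZ(x)=W(F_0(x))$ for a uniformly continuous $W$ on $[0,1]$, and approximate $W$ uniformly within $\eta$ by a Lipschitz $\tilde W$ (constant $L_\eta$). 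Writing $h=g_1-g_2$ and splitting $\bar{\ZZ}(g_1)-\bar{\ZZ}(g_2)=-\int (\ZZ-\tilde W\circ F_0)(x^-)\,\mathrm{d}h-\int(\tilde W\circ F_0)(x^-)\,\mathrm{d}h$, I would bound the first integral by $2T\eta$ via $\|h\|_{TV}\le2T$, and integrate the second by parts while using that the variation measure of $\tilde W\circ F_0$ is dominated by $L_\eta\,\mathrm{d}F_0$, to obtain
\[
\bigl|\bar{\ZZ}(g_1)-\bar{\ZZ}(g_2)\bigr|\le 2T\eta+L_\eta\,\|g_1-g_2\|_{L_1(F_0)}.
\]
Choosing $\eta$ small and then $\|g_1-g_2\|_{L_1(F_0)}$ small gives the required uniform $L_1(F_0)$-continuity.

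The main obstacle, and the place demanding the most care, is the bookkeeping at common discontinuities of $\ZZ_n$ and $g$---the interplay between the atoms of the limit and the jumps of the indexing functions flagged in the introduction. Concretely, the left limit $\ZZ_n(x^-)$ (rather than $\ZZ_n(x)$) is forced on us by the Stieltjes product rule, and one must verify both that $\Phi$ is well defined and continuous on a path space containing the right-continuous, left-limited trajectories of $\ZZ_n$ and $\ZZ$---so that the continuous mapping theorem applies cleanly---and that the boundary terms at $\pm\infty$ genuinely vanish for the limit, i.e. that $\ZZ$ inherits property A1. The remaining estimates are routine once the Lipschitz bound and the Lipschitz approximation of $W$ are in place.
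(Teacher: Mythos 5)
Your proposal is correct, and it takes a genuinely different route from the paper's actual proof of Theorem 1. Your key device is the exact identity $\bar{\ZZ}_n(g)=-\int \ZZ_n(x^-)\,\mathrm{d}g(x)$, with \emph{no} residual common-jump term: writing $\mu,\nu$ for the measures of $\ZZ_n$ and $g$, A1 gives $\mu(\RR)=0$, and Fubini gives $\int g\,\mathrm{d}\mu=\int\!\!\int 1_{y\le x}\,\mathrm{d}\nu(y)\,\mathrm{d}\mu(x)=\int \mu([y,\infty))\,\mathrm{d}\nu(y)=-\int \ZZ_n(y^-)\,\mathrm{d}\nu(y)$, which absorbs the term $\sum_i \Delta g(a_i)\Delta\ZZ_n(a_i)$ into the left-limit integrand. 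This makes $z\mapsto\Phi(z)=(-\int z(x^-)\,\mathrm{d}g(x))_{g\in\G}$ a $T$-Lipschitz map into $\ell^\infty(\G)$ on the closed subspace of bounded paths with left limits, so one application of the continuous mapping theorem settles weak convergence. The paper uses the CMT-plus-Lipschitz idea only in its ``short proof'' for limits with Lebesgue-continuous paths, where the remainder $R_n(g)\le T\sup_x|\ZZ_n(x)-\ZZ_n(x^-)|$ vanishes asymptotically; for general $F_0$ it explicitly declares that argument insufficient and instead proves finite-dimensional convergence (Lemma 9 --- whose $T_1+T_2$ is precisely your $\Phi(\cdot)(g)$, though used there only marginally) together with stochastic equicontinuity via the Decoupling Lemma (Lemma 10, Corollary 11), then invokes total boundedness of $(\G,L_1(F_0))$ and Theorems 1.5.4 and 1.5.7 of Van der Vaart and Wellner. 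Your route buys economy: tightness in $\ell^\infty(\G)$ is free from the uniform Lipschitz bound, total boundedness of $\G$ is never needed, and the atoms-versus-jumps bookkeeping disappears into the single Fubini computation; what it forgoes is the quantitative modulus bound of Lemma 10, which the paper reuses verbatim to prove the bootstrap Theorem 6 and advertises as of independent interest. Two fixable points deserve care beyond what you flag: in your final display, integrating $\int(\tilde W\circ F_0)(x^-)\,\mathrm{d}h$ by parts produces boundary terms $(\tilde W\circ F_0)h$ at $\pm\infty$, which vanish only once you know $W(0)=W(1)=0$; this does hold --- $\inf_{|t|\ge M}|\ZZ_n(t)|=0$ by A1 passes through the CMT, and $d$-uniform continuity of $\ZZ$ yields limits at $\pm\infty$, forcing $\ZZ(\pm\infty)=0$ --- but it is exactly the ``$\ZZ$ inherits A1'' step you postpone, and after normalizing $\tilde W(0)=\tilde W(1)=0$ your bound becomes $CT\eta+L_\eta\|g_1-g_2\|_{L_1(F_0)}$ with a slightly larger constant. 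Second, the approximating sums in your Gaussianity argument should use left-endpoint tags on intervals $[a_\ell,b_\ell)$ together with the left-continuity of $x\mapsto\ZZ(x^-)$, so that pointwise convergence (and then dominated convergence against the finite measure $|\mathrm{d}g|$) holds even at common discontinuities; with right tags the sums can fail to converge there. Neither issue undermines the approach.
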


\paragraph{Remark.}
Usually, an envelope condition $\sup |g(x)|\leq G(x)$ and $G\in L_2(F_0)$ is imposed. Since such a condition, coupled with
$\Vert g\Vert_{TV}\leq T$, implies that the functions $g$ are uniformly bounded, we 
assume $\Vert g\Vert _{\infty }\leq T$ in our definition of $BV_T$. 
\\

Theorem 1 allows us to derive weak convergence of $\bar{\mathbb{Z}}_{n}(g)$ via $\mathbb{Z}_{n}(t)$, regardless of the structure of the latter process. For
instance, taking $\mathbb{Z}_{n}$ as the standard empirical processes $\mathbb{G}_{n}$ based on stationary sequences $\{X_{i}\}_{i\ge 1}$, we obtain the
following corollary as an immediate consequence of Theorem 1.

\begin{corollary}
Let $\{X_k\}_{k\ge 1}$ be a stationary sequence of  random
variables    with  distribution $F$ and $\alpha$-mixing coefficients $\alpha_n$ satisfying $\alpha
_{n}= O(n^{-r}) $, for some $r>1$ and all  $n\ge1$.
 Then,  for any    $\G\subseteq BV_T'$, 
 \begin{itemize}
 \item[-]
  $ \{\int g\, {\rm d} \GG_n$, $g\in\G\}$ converges weakly to a 
 Gaussian process with uniformly  $L_1(F)$-continuous sample paths in $\ell^\infty(\G)$.
\item[-]
For continuous $F$,   $\mathcal{G}$  can be enlarged to $BV_{T}$.
\end{itemize}
\end{corollary}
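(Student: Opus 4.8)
The plan is to read the Corollary as the specialization of Theorem~\ref{theorem 4} to $\ZZ_n=\GG_n$ with $F_0=F$, supplemented by one short extra argument that removes the right-continuity restriction when $F$ is continuous.

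First I would verify that the canonical process satisfies A1 and A2. Writing $\GG_n(t)=\sqrt n\,(F_n(t)-F(t))$ with $F_n$ the empirical distribution function, its paths are right-continuous step functions corrected by $-\sqrt n\,F$, hence right-continuous and of finite total variation (A2), while $\GG_n(t)\to 0$ as $|t|\to\infty$ because $1_{(-\infty,t]}(X_i)-F(t)\to 0$ in both tails (A1). Next I would invoke the classical invariance principle for $\alpha$-mixing empirical processes (Rio (2000)): under $\alpha_n=O(n^{-r})$ with $r>1$ one has $\GG_n\indistrto\GG$ in $\ell^\infty(\RR)$, where $\GG$ is a centered Gaussian process whose sample paths are uniformly continuous with respect to $d(s,t)=|F(s)-F(t)|$. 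With these ingredients the hypotheses of Theorem~\ref{theorem 4} hold verbatim (take $F_0=F$), and its conclusion is exactly the first assertion: $\{\int g\,\mathrm{d}\GG_n,\ g\in\G\}$ converges weakly to a Gaussian process with uniformly $L_1(F)$-continuous paths in $\ell^\infty(\G)$ for every $\G\subseteq BV_T'$.

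For the second assertion I would drop right-continuity by comparing each $g\in BV_T$ with its right-continuous modification $\widetilde g(x):=g(x+)$, which lies in $BV_T'$ since $\|\widetilde g\|_\infty\le\|g\|_\infty$ and $\|\widetilde g\|_{TV}\le\|g\|_{TV}\le T$. Using $\bar{\ZZ}_n(g)=n^{-1/2}\sum_i\{g(X_i)-\EE g(X_i)\}$ and the fact that, for continuous $F$, $g$ and $\widetilde g$ agree $F$-almost everywhere (they differ only on the countable discontinuity set of $g$), the mean terms cancel and $\bar{\ZZ}_n(g)-\bar{\ZZ}_n(\widetilde g)=n^{-1/2}\sum_{i=1}^n (g-\widetilde g)(X_i)$. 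Because $F$ is continuous the sample has no ties almost surely, so $X_1,\dots,X_n$ are $n$ distinct points; the total-variation budget then forces $\sum_{i=1}^n|(g-\widetilde g)(X_i)|\le\|g\|_{TV}\le T$, each term being the size of a one-sided jump of $g$ at a distinct point and these jumps being jointly dominated by the total variation. Hence $\sup_{g\in BV_T}|\bar{\ZZ}_n(g)-\bar{\ZZ}_n(\widetilde g)|\le T/\sqrt n\to 0$ almost surely. Finally, the map $\psi:\ell^\infty(BV_T')\to\ell^\infty(BV_T)$, $(\psi z)(g)=z(\widetilde g)$, is a contraction, so the first assertion (applied to $BV_T'$) together with the continuous mapping theorem gives weak convergence of $\{\bar{\ZZ}_n(\widetilde g),\ g\in BV_T\}$; the uniform bound above then upgrades this, via the asymptotic-equivalence (Slutsky) lemma in $\ell^\infty$, to weak convergence of $\{\bar{\ZZ}_n(g),\ g\in BV_T\}$ to the same Gaussian limit, whose $L_1(F)$-continuity is preserved since $\|g-h\|_{L_1(F)}=\|\widetilde g-\widetilde h\|_{L_1(F)}$.

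I expect the only genuine obstacle to be the uniform control in the second assertion. Pointwise one trivially has $\bar{\ZZ}_n(g)=\bar{\ZZ}_n(\widetilde g)$ almost surely, but the supremum over $g\in BV_T$ is not automatically negligible, because a single realization admits functions $g$ that jump precisely at the observed data points. The argument is rescued by continuity of $F$ (no ties, hence multiplicity one) combined with the constraint $\|g\|_{TV}\le T$, which caps the aggregate jump discrepancy by $T$ and defeats it against the $n^{-1/2}$ normalization. Everything else is either a direct citation (the mixing invariance principle) or a verbatim application of Theorem~\ref{theorem 4}.
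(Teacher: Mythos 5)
Your proposal is correct and takes essentially the same route as the paper: the first bullet is exactly the paper's appeal to Rio (2000, Theorem 7.2) followed by a verbatim application of Theorem 1 with $F_0=F$, and your comparison of $g\in BV_T$ with its right-continuous modification is precisely the paper's Lemma 3. The only (harmless) difference is in how the discrepancy is killed: the paper bounds it by $T\sup_x|\GG_n(x)-\GG_n(x^-)|$ and lets it vanish via weak convergence to a limit that is continuous for continuous $F$, whereas you obtain the deterministic almost-sure bound $T/\sqrt{n}$ directly from the no-ties property, which is a slightly more explicit finish of the same argument.
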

\begin{proof}
 It is well known, see Theorem 7.2, page
96 in Rio (2000), that $\alpha _{n}=O(n^{-r})$ with $r>1$,  implies that the standard
empirical process $\mathbb{G}_{n}(t)$ converges weakly to a Brownian bridge
process with uniformly continuous paths with respect to the distance $
d(s,t)=|F(s)-F(t)|$, for the stationary distribution $F$ of $X_{k}$. The
result for $\mathcal{G}\subseteq BV_{T}^{\prime }\,\ $now follows trivially
from Theorem 1. If   $F$ is continuous, then the limiting
Brownian bridge has uniformly continuous sample paths with respect to the  Lebesgue measure
on $\mathbb{R}$. This, combined with   Lemma 3 below, 
implies Corollary 2.
\end{proof}
\bigskip

We used the following lemma.

\begin{lemma}
We have, for $\bar{\ZZ}_n(g)$ based on the canonical process $\ZZ_n(x)=\GG_n(x)$,
\begin{equation*}
\sup_{g\in BV_{T}}\inf_{h\in BV_{T}^{\prime }}\left\vert \bar{\mathbb{Z}}
_{n}(g)-\bar{\mathbb{Z}}_{n}(h)\right\vert \leq T\sup_{x}|\mathbb{G}_{n}(x)-
\mathbb{G}_{n}(x^{-})|.
\end{equation*}
\end{lemma}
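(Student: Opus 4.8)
The plan is to exhibit, for each $g\in BV_{T}$, an explicit right-continuous approximant $h\in BV_{T}'$ and to control $|\bar{\mathbb{Z}}_{n}(g)-\bar{\mathbb{Z}}_{n}(h)|$ directly. The natural choice is the right-continuous modification $h(x):=g(x^{+})=\lim_{y\downarrow x}g(y)$, which is well defined since functions of bounded variation possess one-sided limits everywhere. First I would verify that $h\in BV_{T}'$: it is right-continuous by construction, $\Vert h\Vert_{\infty}\le\Vert g\Vert_{\infty}\le T$ because $|g(x^{+})|\le\Vert g\Vert_{\infty}$, and $\Vert h\Vert_{TV}\le\Vert g\Vert_{TV}\le T$ because every variation sum for $h$ is a limit of variation sums for $g$. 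Note also that $g$ and $h$ differ only on the at most countable set $D$ of discontinuities of $g$, where $(g-h)(x)=g(x)-g(x^{+})$.

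Next I would compute $\bar{\mathbb{Z}}_{n}(g)-\bar{\mathbb{Z}}_{n}(h)=\int(g-h)\,\mathrm{d}\mathbb{G}_{n}$ using the explicit form of the signed measure induced by the canonical process, $\mathrm{d}\mathbb{G}_{n}=\frac{1}{\sqrt{n}}\sum_{i=1}^{n}\delta_{X_{i}}-\sqrt{n}\,\mathrm{d}F$. Since $g-h$ vanishes off the countable set $D$, only the atoms of $\mathrm{d}\mathbb{G}_{n}$ contribute, and grouping the two pieces point by point gives
$$\int(g-h)\,\mathrm{d}\mathbb{G}_{n}=\sum_{x\in D}\bigl(g(x)-g(x^{+})\bigr)\Bigl[\tfrac{1}{\sqrt{n}}\#\{i:X_{i}=x\}-\sqrt{n}\,(F(x)-F(x^{-}))\Bigr].$$
The key observation is that the bracketed quantity is exactly the jump $\mathbb{G}_{n}(x)-\mathbb{G}_{n}(x^{-})$ of the empirical process at $x$, so the difference equals $\sum_{x\in D}\bigl(g(x)-g(x^{+})\bigr)\,[\mathbb{G}_{n}(x)-\mathbb{G}_{n}(x^{-})]$.

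Finally I would bound each jump of $\mathbb{G}_{n}$ by $\sup_{x}|\mathbb{G}_{n}(x)-\mathbb{G}_{n}(x^{-})|$ and factor it out, leaving $\sum_{x\in D}|g(x)-g(x^{+})|$. Since the sum of the magnitudes of the right-jumps of a function of bounded variation is dominated by its total variation, $\sum_{x\in D}|g(x)-g(x^{+})|\le\Vert g\Vert_{TV}\le T$, which yields $|\bar{\mathbb{Z}}_{n}(g)-\bar{\mathbb{Z}}_{n}(h)|\le T\sup_{x}|\mathbb{G}_{n}(x)-\mathbb{G}_{n}(x^{-})|$. Taking the infimum over $h\in BV_{T}'$ (the single chosen $h$ already suffices) and then the supremum over $g\in BV_{T}$ gives the claim, the bound being uniform in $g$.

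I expect the main obstacle to be the careful bookkeeping in the second step: verifying that the atomic contribution from $\frac{1}{\sqrt{n}}\sum_{i}\delta_{X_{i}}$ and the contribution from $-\sqrt{n}\,\mathrm{d}F$, which sees only the atoms of $F$, recombine precisely into the jump of $\mathbb{G}_{n}$ when the discontinuities of $g$ coincide with data points or with atoms of $F$. This is exactly the interplay between the atoms of the process and the discontinuities of $g$ flagged in the introduction, and it is what forces the bound to be expressed through $\sup_{x}|\mathbb{G}_{n}(x)-\mathbb{G}_{n}(x^{-})|$ rather than vanishing outright.
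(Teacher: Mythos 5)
Your proposal is correct and takes essentially the same approach as the paper: both choose the right-continuous modification $h(x)=g(x^{+})$, identify $\int (g-h)\,\mathrm{d}\mathbb{G}_{n}$ with the jump sum $\sum_{i}\bigl(g(a_{i})-g(a_{i}^{+})\bigr)\bigl(\mathbb{G}_{n}(a_{i})-\mathbb{G}_{n}(a_{i}^{-})\bigr)$ over the countably many discontinuities, and bound it by $\Vert g\Vert_{TV}\sup_{x}|\mathbb{G}_{n}(x)-\mathbb{G}_{n}(x^{-})|\leq T\sup_{x}|\mathbb{G}_{n}(x)-\mathbb{G}_{n}(x^{-})|$. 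Your explicit decomposition $\mathrm{d}\mathbb{G}_{n}=\frac{1}{\sqrt{n}}\sum_{i}\delta_{X_{i}}-\sqrt{n}\,\mathrm{d}F$ and the verification that $h\in BV_{T}'$ merely spell out bookkeeping that the paper's terser proof leaves implicit.
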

 \begin{proof} Let $g$ be an arbitrary function in $BV_{T}.$ We denote its   countable many
discontinuities  by $a_{i}.$
Let $\overline{g}$ be the right-continuous version of $g$, that is, $
\overline{g}(x)=g(x)$ for all $x\neq a_{i}$ and $\overline{g}
(a_{i})=g(a_{i}^{+})$ for all $i$. Then
\begin{eqnarray*}
\left\vert \int g\, {\rm d} \mathbb{G}_{n}-\int \overline{g}\, {\rm d} \mathbb{G}_{n}\right\vert
&\leq& \sum_{i}|g(a_{i})-\overline{g}(a_{i})||\mathbb{G}_{n}(a_{i})-\mathbb{G}
_{n}(a_{i}^{-})|\\
&\leq& \left\Vert g\right\Vert _{TV}\sup_{x}|\mathbb{G}_{n}(x)-
\mathbb{G}_{n}(x^{-})|
\end{eqnarray*}
and the  conclusion  follows.
\end{proof}
\bigskip

We would like to point out that $\alpha$-mixing is the least restrictive form
of available mixing assumptions in the literature. To the best of our
knowledge, there are actually very few results that treat processes $\{ \int g\,\mathrm{d}\mathbb{G}_{n}, \, g\in \mathcal{G}\}$
indexed by functions, and they all require very stringent conditions on the
entropy numbers of $\mathcal{G}$ and on the rate of decay for $\alpha _{k}$.
See, for instance, Andrews and Pollard (1994). This is due to fact that
$\alpha$-mixing does not allow for sharp exponential inequalities for partial
sums. Consequently, the only known cases for which we have sharp conditions
are under more restrictive, $\beta$-mixing dependence. Indeed, $\beta$-mixing allows for
decoupling and it does yield exponential inequalities not unlike the i.i.d.
case. The current state-of-the art results, Arcones and Yu (1994), Doukhan,
Massart and Rio (1995), applied to bounded sequences, require $\sum_{n}\beta
_{n}<\infty $.\\

While it is correct that $\alpha$-mixing is the weakest of the known mixing concepts, a referee pointed out that functionals of mixing processes constitutes a much weaker concept of short range dependence.  For instance, Billingsley (1968, Theorem 22.2) establishes the empirical process CLT for certain functionals of $\phi$-mixing processes.
The same  referee made us aware  that Theorem 1 also applies to the empirical process of long-range dependent data, established by Dehling and Taqqu (1989) for subordinate Gaussian processes, and by Ho and Hsing (1996) for linear processes.
We provide an example  to demonstrate that Theorem 1 goes  beyond dependence defined via mixing conditions. It applies to short memory causal linear sequences $\{ X_i\}_{i\ge1}$ that
are defined by 
$
X_{i}=\sum_{j=0}^{\infty }a_{j}\xi _{i-j}
$
based on i.i.d. random variables $\xi _{i}$ and constants $a_{i}$. While the 
$X_{i}$ form a stationary sequence, they do not necessarily satisfy any
mixing condition. Weak convergence of the empirical processes $\{\mathbb{G}_{n}(t), \ t\in \mathbb{R}\}$ was established under sharp conditions in
Doukhan and Surgailis (1998) on the weights $a_j$ ($\sum_{j\ge 0} |a_j|^\gamma<\infty$ for some $0< \gamma\le 1$),
characteristic function of $\xi_0$ ($|\EE[ \exp( i u \xi_0 ) ] | \le C (1+|u|)^{-p}$ for all $u\in\RR$, some $C<\infty$ and $2/3<  p \le 1$),
 and a moment condition on $\xi_0$
($\EE[ |\xi_0|^{4\gamma} ] < \infty$).
 To the best of our knowledge, there are no
extensions to the more general processes $\{\bar{\mathbb{Z}}_{n}(g),\, g\in\G\}$. Theorem 1 and
the Doukhan and Surgailis (1998) result combined imply the following:\\


\begin{corollary}
Let 
$\{X_{i}=\sum_{j\geq 0}a_{j}\xi _{i-j}\}_{i\ge1} $
 be such that conditions of Doukhan and Surgailis (1998, pp 87--88)
are satisfied and let $F$  be the stationary distribution of $X_{i} $.
Then, for any $\mathcal{G}\subseteq BV_{T}$,  $\{\int g\, {\rm d} \mathbb{G}_{n},\ 
g\in \mathcal{G}\}$  converges weakly to a 
Gaussian process with uniformly $L_{1}(F)$-continuous sample paths in $\ell ^{\infty}(\mathcal{G})$.
\end{corollary}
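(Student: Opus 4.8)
The plan is to mirror the proof of Corollary 2, replacing the mixing input by the empirical process CLT of Doukhan and Surgailis (1998) for causal linear sequences. First I would invoke their result (pp.~87--88): under the stated hypotheses on the weights $a_j$, on the decay of the characteristic function of $\xi_0$, and on the moments of $\xi_0$, the standard empirical process $\{\GG_n(t),\ t\in\RR\}$ converges weakly to a Gaussian process $\{\ZZ(t),\ t\in\RR\}$ whose sample paths are uniformly continuous with respect to the distance $d(s,t)=|F(s)-F(t)|$, where $F$ is the stationary distribution function of $X_i$. This is precisely the hypothesis of Theorem 1 with $F_0=F$, so a direct application of that theorem yields that $\{\int g\,\mathrm{d}\GG_n,\ g\in\G\}$ converges weakly in $\ell^\infty(\G)$ to a Gaussian process with uniformly $L_1(F)$-continuous sample paths, for every $\G\subseteq BV_T'$.

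It then remains to enlarge the index class from $BV_T'$ to all of $BV_T$, exactly as in the continuous case of Corollary 2. The key observation is that the Doukhan and Surgailis conditions force $F$ to be continuous: the decay bound $|\EE[\exp(iu\xi_0)]|\le C(1+|u|)^{-p}$ drives $|\EE[\exp(iu\xi_0)]|\to 0$, so $\xi_0$ is atomless, and since each $X_i=\sum_{j\ge 0}a_j\xi_{i-j}$ contains an atomless independent summand, $X_i$ is atomless as well; hence $F$ has no jumps. Consequently the observations $\{X_1,\ldots,X_n\}$ are almost surely distinct and $F$ contributes no atoms, so that $\sup_x|\GG_n(x)-\GG_n(x^-)|=n^{-1/2}\to 0$. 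Feeding this into Lemma 3 gives
\[
\sup_{g\in BV_T}\inf_{h\in BV_T'}\bigl|\bar{\ZZ}_n(g)-\bar{\ZZ}_n(h)\bigr|\le T\,n^{-1/2}\longrightarrow 0 .
\]

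Finally I would combine this uniformly negligible approximation with the weak convergence already established on $BV_T'$. Each $g\in BV_T$ has a right-continuous version $\bar g\in BV_T'$ differing from $g$ only on the countable set of discontinuities of $g$; because the limiting Gaussian process has continuous paths when $F$ is continuous, $\int g\,\mathrm{d}\ZZ$ and $\int\bar g\,\mathrm{d}\ZZ$ coincide, so the limit is unambiguously defined on $BV_T$. The displayed bound then shows that the $BV_T'$-indexed processes and their $BV_T$-indexed extensions are asymptotically indistinguishable in $\ell^\infty$, so the convergence of finite-dimensional distributions together with asymptotic equicontinuity transfers from $BV_T'$ to $BV_T$, yielding the same $L_1(F)$-continuous Gaussian limit. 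The only steps requiring genuine care are the first one, namely checking that the Doukhan and Surgailis theorem delivers weak convergence in exactly the metric $d(s,t)=|F(s)-F(t)|$ demanded by Theorem 1, and the verification that their assumptions render $F$ continuous; the enlargement via Lemma 3 is then routine, precisely as for Corollary 2.
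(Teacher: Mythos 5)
Your proposal is correct and takes essentially the same route as the paper: invoke the Doukhan--Surgailis CLT for $\mathbb{G}_n$, apply Theorem 1 with $F_0=F$, and use continuity of $F$ (which the paper simply cites from Doukhan and Surgailis (1998, p.~88), while you verify it from the characteristic-function decay) together with Lemma 3 to enlarge the index class from $BV_T'$ to $BV_T$. One minor caution: continuity of the marginal $F$ alone does not make dependent observations almost surely distinct, so either apply your atomless-independent-summand argument to the pairwise differences $X_i-X_k$ (the innovation $\xi_{i-j_0}$, with $j_0=\min\{j: a_j\neq 0\}$, enters $X_i-X_k$ with coefficient $a_{j_0}\neq 0$ and is independent of the remaining terms), or sidestep the issue entirely by noting that weak convergence of $\mathbb{G}_n$ to a limit with continuous sample paths already forces $\sup_x|\mathbb{G}_n(x)-\mathbb{G}_n(x^-)|\to 0$ in probability, which is all that Lemma 3 requires.
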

\begin{proof}
 Doukhan and Surgailis (1998) proved that $
\{\mathbb{G}_{n}(t),\ t\in\RR\}$ converges weakly to a Gaussian process in the Skorohod
space. Since $F$ is continuous under their assumptions, see Doukhan and
Surgailis (1998, pp 88), the sample paths of limiting process of $\mathbb{G}_{n}$ are
uniformly continuous with respect to the distance $d(s,t)=|F(s)-F(t)|$ based on  the
stationary distribution $F$. The proof now follows trivially from Theorem 1
and Lemma 3.
\end{proof}
\bigskip

The recent papers by Dehling, Durieu and Volny (2009)  and Dehling,  Durieu and  Tusche (2014) offer yet another,
clever way to prove the weak limit of the standard empirical processes $\mathbb{G}_{n}$ based on stationary sequences that are not necessarily
mixing. Their technique uses finite dimensional convergence coupled with a
bound on the higher moments of partial sums, which in turn controls the
dependence structure. Dehling, Durieu and Volny (2009) establishes the weak convergence
of $\mathbb{G}_{n}$, while Dehling,  Durieu and  Tusche (2014) extends this idea to $\bar{\ZZ}_n$ indexed by more
general classes of functions. However, these authors impose  cumbersome entropy
conditions and only manage to marginally extend the classes. For example,
they  prove weak convergence of the process $\int f_{t}\,\mathrm{d}%
\mathbb{G}_{n}$, indexed by functions $f_{t}(x)$ which constitute an
one-dimensional monotone class (under the restrictive requirement that $\ s\leq
t\Rightarrow f_{s}\leq f_{t}$). Theorem 1 applied in their setting, yields a
more general result.

\begin{corollary}
Let  $\mathcal{G}\subseteq BV_{T}$ and
let  $F$ be the stationary distribution of the sequence $\{X_{i}\}_{i\ge1}$. Under
assumptions (i) and (ii) in Section 1 of Dehling, Durieu and Volny (2009),   $\{\int g\,{\rm d}\mathbb{G}_{n},\ g\in \mathcal{G}\}$ converges weakly to a 
Gaussian process with uniformly $L_{1}(F)$-continuous sample paths in $\ell ^{\infty }(\mathcal{G})$.
\end{corollary}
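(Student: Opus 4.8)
The plan is to follow the template already used for Corollaries 2 and 4, reducing the claim to Theorem 1 supplemented by the enlargement afforded by Lemma 3. The first step is to invoke the main result of Dehling, Durieu and Volny (2009): under their assumptions (i) and (ii), the classical empirical process $\{\GG_n(t),\ t\in\RR\}$ converges weakly in the Skorohod space to a Gaussian process $\{\GG(t),\ t\in\RR\}$. This is precisely the kind of input demanded by the hypothesis of Theorem 1, provided one identifies the correct modulus of continuity for the limit.

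The second step is therefore to verify that $\{\GG(t),\ t\in\RR\}$ has uniformly continuous sample paths with respect to the pseudo-distance $d(s,t)=|F(s)-F(t)|$. As in the proof of Corollary 4, the decisive fact is the continuity of the stationary distribution function $F$, which holds in the Dehling-Durieu-Volny framework. The increments of the limiting Gaussian process over an interval $[s,t]$ have variance controlled by $|F(s)-F(t)|$, so that continuity of $F$ forces the sample paths to be uniformly $d$-continuous, and in particular continuous in the usual topology on $\RR$. With this in hand, Theorem 1 applies verbatim and yields weak convergence of $\{\int g\,\mathrm{d}\GG_n,\ g\in\G\}$ to a Gaussian process with uniformly $L_1(F)$-continuous sample paths in $\ell^\infty(\G)$, for every $\G\subseteq BV_T'$.

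The final step promotes the indexing class from $BV_T'$ to the full class $BV_T$. Here I would invoke Lemma 3, which bounds $\sup_{g\in BV_T}\inf_{h\in BV_T'}|\bar{\ZZ}_n(g)-\bar{\ZZ}_n(h)|$ by $T\sup_x|\GG_n(x)-\GG_n(x^-)|$, that is, by $T$ times the largest jump of $\GG_n$. Since $\GG_n$ converges weakly in the Skorohod topology to the continuous process $\GG$, and since the maximal-jump functional is continuous at continuous paths and vanishes there, the continuous mapping theorem gives $\sup_x|\GG_n(x)-\GG_n(x^-)|\to 0$ in probability. The right-hand side of the Lemma 3 bound is thus asymptotically negligible, so the weak limit obtained over $BV_T'$ transfers unchanged to $BV_T$, completing the argument. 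The main obstacle is the second step: one must confirm, from the specifics of assumptions (i) and (ii), that $F$ is continuous and that the resulting Gaussian limit is genuinely uniformly $d$-continuous; once this is secured, both the application of Theorem 1 and the Lemma 3 enlargement are routine.
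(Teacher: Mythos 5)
Your proposal is correct and follows essentially the same route as the paper: invoke Dehling, Durieu and Volny (2009) for weak convergence of $\mathbb{G}_n$, use continuity of $F$ (their display (3)) to get a limit with uniformly $d$-continuous sample paths where $d(s,t)=|F(s)-F(t)|$, apply Theorem 1 on $BV_T'$, and pass to $BV_T$ via Lemma 3 and the vanishing maximal jump of $\mathbb{G}_n$. The only cosmetic difference is that the paper cites Dehling, Durieu and Volny directly for the uniform $d$-continuity of the Gaussian limit, whereas you sketch a re-derivation from increment variances; your sketch would need an entropy or tightness argument to be complete, but the cited result makes this unnecessary.
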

\begin{proof}
The underlying distribution function $F$ of $X_{k}$ in Dehling, Durieu and Volny (2009) is continuous, see their display (3) at page 3702.
Moreover, these authors establish   weak convergence of $\mathbb{G}_{n} $ to a Gaussian process that has uniformly continuous sample paths with respect to the distance $d(s,t)=|F(s)-F(t)|$. The proof  follows trivially from Theorem 1 and Lemma 3.
\end{proof}

\section{Bootstrap}

The weak limit of $\bar{\mathbb{Z}}_{n}$ based on $\GG_n$ in Theorem 1 is a Gaussian process $\bar{\ZZ}$ with  
complicated covariance structure 
\begin{eqnarray*}
\mathbb{E}[\bar{\mathbb{Z}}(f)\bar{\mathbb{Z}}(g)] &=&\text{\textrm{Cov}}
(f(X_{0}),g(X_{0}))+\sum_{k=1}^{\infty }\text{\textrm{Cov}}
(f(X_{0}),g(X_{k})) \\
&&+\sum_{k=1}^{\infty }\text{\textrm{Cov}}(f(X_{k}),g(X_{0}))
\end{eqnarray*}
for $f,g\in \G\subset BV_T$.
A   closed form solution is  seldom available, so that   actual applications of   weak limit results   of $\bar{\mathbb{Z}}_{n}$ are   hard to
implement. This situation calls for the bootstrap principle. Given
the sample of first $n$ observations $\{X_{1},\ldots ,X_{n}\}$, we let $\mathbb{G}_{n}^{\ast }$ be the
bootstrap empirical process 
\begin{equation*}
\mathbb{G}_{n}^{\ast }(t)=\sqrt{m_{n}}\left( \frac{1}{m_{n}}
\sum_{i=1}^{m_{n}}1_{(-\infty ,t]}(X_{i}^{\ast })-\frac{1}{n}
\sum_{i=1}^{n}1_{(-\infty ,t]}(X_{i})\right) ,\ t\in \mathbb{R},
\end{equation*}
based on a bootstrap sample $\{X_{1,n}^{\ast },\ldots ,X_{m_{n},n}^{\ast }\}$. 
We
stress that  no additional assumption on the structure
of the variables $X_{i,n}^{\ast }$ is required for our purposes. Analogous to $\bar{\mathbb{Z}}_{n}(g)=\int g\,\mathrm{d}\mathbb{G}_{n}$, we define $\bar{\mathbb{Z}}_{n}^{\ast
}(g):=\int g\,\mathrm{d}\mathbb{G}_{n}^{\ast }$ for any $g\in BV_{T}^{\prime
}$ with $T<\infty $. We recall that the bounded Lipschitz distance 
\begin{equation*}
d_{BL}(\mathbb{Z}_{n}^{\ast },\mathbb{Z})=\sup_{h\in BL_{1}}\left\vert 
\mathbb{E}^{\ast }[h(\mathbb{Z}_{n}^{\ast })]-\mathbb{E}[h(\mathbb{Z}%
)]\right\vert
\end{equation*}%
between two processes $\mathbb{Z}_{n}^{\ast }$ and $\mathbb{Z}$ metrizes
weak convergence. The symbol $\mathbb{E}^{\ast }$ stands for  the expectation over the
randomness of the bootstrap sample $X_{1,n}^{\ast },\ldots ,X_{m_{n},n}^{\ast }$, conditionally given the original sample $X_{1},\ldots ,X_{n}$,
while  $BL_{1}$ denotes the
 space of Lipschitz functionals $h:\ell ^{\infty }(\mathcal{G}%
)\rightarrow \mathbb{R}$ with $|h(x)|\leq 1$ and $|h(x)-h(y)|\leq \Vert
x-y\Vert _{\infty }$ for all $x,y\in \mathcal{G}\subset BV_{T}$. As is customary
in the literature, we speak of weak convergence
in probability if the random variable $d_{BL}(\mathbb{Z}_{n}^{\ast },\mathbb{Z})$ converges to zero in probability; if it converges to zero almost surely, we speak of weak
convergence almost surely.

\bigskip

\begin{theorem}
Let $\mathcal{G}\subseteq BV_{T}^{\prime }$.
Assume that, conditionally on  $X_{1},\ldots ,X_{n}$, 
 $\{\GG_{n}^{\ast }(t),\ t\in \RR\}$  converges weakly to a
Gaussian process, in probability, that has unifomly continuous sample paths with respect to the distance  $
d(s,t)=|F(s)-F(t)|$ based on the stationary distribution $F$ of  $X_{i}$.
The following three statements hold true:
\begin{enumerate}
\item
  $\left\{   \int g\, {\rm d} \mathbb{G}_{n}^{\ast },\ g\in \mathcal{G} \right\}$  converges
weakly to a Gaussian process with   uniformly  $L_{1}(F)$-continuous sample paths in $\ell
^{\infty }(\mathcal{G})$.
\item  If the weak convergence of $\{ \mathbb{G}_{n}^{\ast } (t),\ t\in\RR\}$    holds almost surely, then the conclusion that  $\left\{   \int g\, {\rm d} \mathbb{G}_{n}^{\ast },\ g\in \mathcal{G} \right\}$  converges weakly in $\ell ^{\infty }(\G)$ holds almost surely as well.
\item
If  $\{\mathbb{G}_{n}(t),\ t\in\RR\} $ and $\{\mathbb{G}_{n}^{\ast }(t),\ t\in\RR\}$  converge to the same limit, then so do  $\left\{   \int g\, {\rm d} \mathbb{G}_{n},\ g\in \mathcal{G} \right\}$
and  $\left\{   \int g\, {\rm d} \mathbb{G}_{n}^{\ast },\ g\in \mathcal{G} \right\}$.
\end{enumerate}
\end{theorem}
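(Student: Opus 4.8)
The plan is to isolate the single analytic fact that drives Theorem 1 --- the Lipschitz continuity of the integration-by-parts functional --- and then transport it to all three bootstrap modes through the sub-multiplicativity of $d_{BL}$. Each $\GG_n^*$ is right-continuous, of bounded variation, and vanishes at $\pm\infty$, so A1 and A2 hold and the appendix's integration-by-parts formula applies. For $g\in BV_T'$ it gives
\begin{equation*}
\bar{\ZZ}_n^*(g)=\int g\,\mathrm{d}\GG_n^* = -\int \GG_n^*(x^-)\,\mathrm{d}g(x),
\end{equation*}
the boundary terms at $\pm\infty$ vanishing by A1 since $g$ is bounded. Writing $\Phi(z):=\{-\int z(x^-)\,\mathrm{d}g(x):\ g\in\G\}$, we have $\bar{\ZZ}_n^*=\Phi(\GG_n^*)$, and the limit candidate is $\bar{\ZZ}:=\Phi(\ZZ)$, which is Gaussian because $\Phi$ is linear and $\ZZ$ is Gaussian.

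First I would record the key estimate. For every $x$ one has $|z(x^-)-w(x^-)|\le\|z-w\|_\infty$, and $\|g\|_{TV}\le T$ on $\G\subseteq BV_T'$, so
\begin{equation*}
\|\Phi(z)-\Phi(w)\|_{\ell^\infty(\G)}=\sup_{g\in\G}\left|\int\big(z(x^-)-w(x^-)\big)\,\mathrm{d}g(x)\right|\le T\,\|z-w\|_\infty.
\end{equation*}
Thus $\Phi$ is $T$-Lipschitz from $(\ell^\infty(\RR),\|\cdot\|_\infty)$ into $(\ell^\infty(\G),\|\cdot\|_\infty)$. This is precisely the continuity underlying Theorem 1; the facts that $\G$ is totally bounded in $L_1(F)$, that $\bar{\ZZ}=\Phi(\ZZ)$ is a tight Borel Gaussian element of $\ell^\infty(\G)$ with uniformly $L_1(F)$-continuous paths, and that the restriction $\G\subseteq BV_T'$ tames the interplay between the atoms of $\ZZ$ at jumps of $F$ and the jumps of $g$, are all established there and carry over unchanged.

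With the Lipschitz bound in hand, Parts 1 and 2 follow at once. For any $h\in BL_1$ on $\ell^\infty(\G)$ the composition $h\circ\Phi$ is bounded by $1$ with Lipschitz constant at most $T$ on $\ell^\infty(\RR)$, so $(h\circ\Phi)/\max(1,T)\in BL_1$; using positive homogeneity of the outer expectation $\EE^*$ and taking the supremum over $h$ gives
\begin{equation*}
d_{BL}\big(\bar{\ZZ}_n^*,\bar{\ZZ}\big)\le\max(1,T)\,d_{BL}\big(\GG_n^*,\ZZ\big),
\end{equation*}
where on the right $d_{BL}$ ranges over $BL_1$ on $\ell^\infty(\RR)$. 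If $\GG_n^*$ converges weakly to $\ZZ$ conditionally in probability, the right-hand side tends to $0$ in probability, proving Part 1; if the convergence is almost sure, so is that of the right-hand side, proving Part 2. Part 3 is then immediate: if $\GG_n$ and $\GG_n^*$ share the limit $\ZZ$, Theorem 1 yields that $\bar{\ZZ}_n$ converges weakly to $\Phi(\ZZ)$, while Parts 1--2 yield weak convergence of $\bar{\ZZ}_n^*$ to the \emph{same} limit $\Phi(\ZZ)$.

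The analytic core has already been paid for in Theorem 1, so the only point needing care here is measure-theoretic. Because $\GG_n^*$ need not be Borel measurable as an $\ell^\infty(\RR)$-valued map, the $d_{BL}$ inequality must be read with outer expectations; it survives because $h\circ\Phi$ is a single, fixed bounded-Lipschitz functional (the continuity of $\Phi$) and $\EE^*$ is positively homogeneous and monotone, so no supremum is ever interchanged with $\EE^*$. I expect this bookkeeping, rather than any new estimate, to be the main --- and only mild --- obstacle.
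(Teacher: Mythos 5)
Your proposal is correct, but it takes a genuinely different route from the paper's. The paper proves Theorem 6 by re-running the proof of Theorem 1 conditionally on the sample: finite-dimensional convergence comes from Lemma 9 (the operators $T_1,T_2$ applied to $\GG_n^*$), stochastic equicontinuity from the analogue of Corollary 11, i.e.\ the Decoupling Lemma bound $(2\sqrt{\delta}+12T)\,\Psi_{2\sqrt{\delta},F}(\GG_n^*)$ plus the jump-modulus term, and the almost-sure and same-limit statements are handled by brief remarks (Cram\`er--Wold for part 3). You instead collapse the paper's two-term integration-by-parts decomposition into the single exact identity $\int g\,{\rm d}\GG_n^*=-\int \GG_n^*(x^-)\,{\rm d}g(x)$: evaluating the integrand at left limits absorbs the atom correction $T_2$ exactly (one checks $\int (z(x)-z(x^-))\,{\rm d}g(x)=\sum_i (g(a_i)-g(a_i^-))(z(a_i)-z(a_i^-))$, since $z-z(\cdot^-)$ vanishes off a countable set), which is precisely the ``interplay between atoms of $\ZZ$ and discontinuities of $g$'' that defeats the paper's short proof when $F$ has atoms, because there the remainder $T\sup_x|\GG_n^*(x)-\GG_n^*(x^-)|$ does not vanish. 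With $\bar{\ZZ}_n^*=\Phi(\GG_n^*)$ for one fixed $T$-Lipschitz linear map $\Phi$, your contraction $d_{BL}(\bar{\ZZ}_n^*,\bar{\ZZ})\le \max(1,T)\,d_{BL}(\GG_n^*,\ZZ)$ transfers all three modes mechanically under the paper's own $d_{BL}$-based definitions of conditional weak convergence; this is slicker and more unified than the paper's conditional re-proof.

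Two points should be tightened. First, $\Phi$ is defined only on the subspace $D_0\subset\ell^\infty(\RR)$ of bounded functions possessing left limits, so before comparing with $d_{BL}(\GG_n^*,\ZZ)$, a supremum over $BL_1$ functionals on all of $\ell^\infty(\RR)$, you must extend each $h\circ\Phi$ off $D_0$ (a McShane extension preserves the bound and the Lipschitz constant) and note that $\GG_n^*$ and, almost surely, $\ZZ$ take values in $D_0$; uniform $d$-continuity of the paths of $\ZZ$ does yield left limits even at atoms of $F$, so this is fine but should be said. Second, your appeal to Theorem 1 for the properties of the limit $\Phi(\ZZ)$ --- Gaussianity, tightness, and above all the uniform $L_1(F)$-continuity of its sample paths, which your sup-norm Lipschitz bound in $z$ cannot deliver (in the $g$-variable it only gives a total-variation modulus) and which genuinely requires the paper's Decoupling Lemma --- is legitimate, but under the in-probability hypothesis of part 1 it formally needs an almost-surely convergent subsequence along which Theorem 1 applies conditionally. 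This is the same bookkeeping the paper compresses into ``as in the proof of Theorem 1, conditionally given the sample,'' so it is a fillable gap, not a flaw.
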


The literature offers numerous
bootstrapping techniques for stationary data, such as  moving block bootstrap,
stationary bootstrap, sieved bootstrap, Markov chain bootstrap, to name a few, but
their validity is proved for specific cases/statistics only. Due to
complications with entropy calculations for dependent triangular arrays,
almost all results treat the standard empirical processes $\mathbb{G}%
_{n}^{\ast }$ with few notable exceptions. The moving block bootstrap was
justified for VC-type classes, but only under rather restrictive $\beta$-mixing
conditions on $X_{i}$ (Radulovi\'{c}, 1996). Bracketing classes were
considered by B\"{u}hlmann (1995), but his conditions are even more
restrictive. 
In contrast, the process $\{\mathbb{G}_{n}(t),$ $t\in \mathbb{R}\}$ is
rather easy to bootstrap. This coupled with Theorem 6 offers the following
result. 

\begin{corollary}
Let $\{X_{j}\}_{j\ge1}$  be a stationary sequence
of random variables with continuous stationary distribution function $F$  and
$\alpha$-mixing coefficients satisfying $\sum_{k\geq n}\alpha
_{k}\le C n^{-\gamma }$, for some $0<\gamma <1/3$ and $C<\infty$. Let 
$\GG_{n}^{\ast }$
 be the bootstrapped standard empirical process based
on the moving block bootstrap, with block sizes  $b_{n}$, specified
in Peligrad (1998, page 882).
Then, for $\mathcal{G}\subset BV_{T}$,
$\{\int g\, {\rm d} \GG_{n}^{\ast },\
g\in \G\}$  converges weakly to
Gaussian process, almost surely, with uniformly $L_1(F)$-continuous sample paths. 
\end{corollary}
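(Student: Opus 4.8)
The plan is to verify the hypotheses of Theorem 6 through the blockwise bootstrap central limit theorem of Peligrad (1998), apply Theorem 6 to obtain almost sure weak convergence over the right-continuous class $BV_T'$, and then lift this to the full class $BV_T$ exactly as was done in Corollaries 2, 4 and 5 via Lemma 3. First I would invoke Peligrad (1998), whose main result states that, under the stated mixing rate $\sum_{k\ge n}\alpha_k\le Cn^{-\gamma}$ with $0<\gamma<1/3$ and with the block sizes $b_n$ specified there, the moving block bootstrap empirical process $\{\GG_n^*(t),\ t\in\RR\}$ converges weakly, conditionally on $X_1,\dots,X_n$ and almost surely, to the same Gaussian limit as the unbootstrapped process $\{\GG_n(t),\ t\in\RR\}$. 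Because $F$ is assumed continuous, this limit is a Gaussian process whose sample paths are uniformly continuous with respect to $d(s,t)=|F(s)-F(t)|$. This is precisely the hypothesis of Theorem 6, with the almost sure mode of convergence.

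Next I would apply Theorem 6 directly. For $\G\subseteq BV_T'$, statement 1 yields weak convergence of $\{\int g\,{\rm d}\GG_n^*,\ g\in\G\}$ to a Gaussian process with uniformly $L_1(F)$-continuous sample paths in $\ell^\infty(\G)$, and since Peligrad's convergence holds almost surely, statement 2 upgrades this to weak convergence almost surely.

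The remaining step, and the one demanding genuine care, is the extension from $BV_T'$ to $BV_T$, since Theorem 6 is stated only for right-continuous indices whereas the corollary allows all of $BV_T$. The proof of Lemma 3 uses only that the integrator is a right-continuous function of bounded variation, so running it verbatim with $\GG_n^*$ in place of $\GG_n$ gives
\begin{equation*}
\sup_{g\in BV_T}\inf_{h\in BV_T'}\Bigl|\int g\,{\rm d}\GG_n^* - \int h\,{\rm d}\GG_n^*\Bigr| \le T\sup_x|\GG_n^*(x)-\GG_n^*(x^-)|.
\end{equation*}
It then suffices to show that the maximal jump $\sup_x|\GG_n^*(x)-\GG_n^*(x^-)|$ tends to zero almost surely. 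The hard part is exactly this control: unlike $\GG_n$, whose jumps all equal $1/\sqrt n$ when $F$ is continuous, the bootstrap process places $\sqrt{m_n}$ times the bootstrap mass at a resampled observation, and that mass is a random multiple of $1/m_n$. I would handle this in one of two ways. Cleanly, the maximal-jump functional $z\mapsto\sup_x|z(x)-z(x^-)|$ is continuous at continuous paths in the Skorohod topology, so the conditional weak convergence of $\GG_n^*$ to a continuous limit forces the maximal jump to zero, and Peligrad's almost sure mode transfers this to an almost sure statement. Alternatively, a direct combinatorial estimate shows that the maximal multiplicity of any single observation in the moving block resample is of order $\log n$ almost surely (its mean being $O(1)$ for $m_n\asymp n$), whence the maximal jump is $O(\sqrt{m_n}\,\log n/m_n)=o(1)$. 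Either route renders the approximation error in the displayed inequality asymptotically negligible almost surely, so $\{\int g\,{\rm d}\GG_n^*,\ g\in\G\}$ indexed by $\G\subseteq BV_T$ shares the almost sure weak limit established over its right-continuous subclass, completing the proof.
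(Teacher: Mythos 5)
Your proposal is correct and follows essentially the same route as the paper's own proof: Peligrad (1998, Theorem 2.3) supplies the almost sure conditional weak convergence of $\{\GG_n^\ast(t),\ t\in\RR\}$ to a Gaussian limit with uniformly $d$-continuous paths, Theorem 6 (parts 1 and 2) gives the result over $BV_T'$, and Lemma 3 (whose proof indeed uses only that the integrator satisfies A1--A2, hence applies verbatim to $\GG_n^\ast$) lifts it to $BV_T$. The one step you elaborate beyond the paper's terse ``invoke Theorem 6 and Lemma 3''---showing $\sup_x|\GG_n^\ast(x)-\GG_n^\ast(x^-)|\to 0$ despite random bootstrap multiplicities---is a genuine detail the paper leaves implicit, and both of your routes work; the simplest formulation in the paper's $\ell^\infty$ framework is that the maximal-jump functional is $2$-Lipschitz for the sup norm, so conditional weak convergence to a limit with continuous sample paths (continuity of $F$ rules out jumps of the limit) forces the maximal jump to zero in conditional probability, almost surely in the conditioning sample, which is all the approximation argument requires.
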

\begin{proof} Theorem 2.3 of Peligrad (1998) establishes
the convergence of $\mathbb{G}_{n}^{\ast }$ to a Gaussian process with uniformly
continuous sample paths with respect to the distance $d(s,t)=|F(s)-F(t)|$ based on the
stationary distribution $F$ of $X_{i}$.
 Invoke
Theorem 6 and Lemma 3 to conclude the proof.
\end{proof}

\bigskip

Just as for  weak convergence of the empirical process based on
stationary sequences, there are numerous results that consider  
bootstrap for stationary, non-mixing sequences. For example, El Ktaibi,   Ivanoff and Weber (2014) study
short memory causal linear sequences, and prove weak convergence of $\mathbb{G}_{n}^{\ast }$ under conditions (on the growth of the weights $a_j$, the characteristic  function of $\xi_0$ and moments of $\xi_0$)  akin to the ones required for its
non-bootstrap counterpart $\mathbb{G}_{n}$ (Doukhan and Surgailis, 1998).
Again, Theorem 6 easily extends their result.

\begin{corollary}
Let $\{X_{i}=\sum_{j\geq 0}a_{j}\xi _{i-j}\}_{i\ge1}$
be a sequence of random variables with stationary distribution $F$
such that conditions of El Ktaibi,   Ivanoff and Weber  (2014) are satisfied. Then,
for any $\mathcal{G}\subset BV_{T}$,  $\{\int g\, {\rm d} \GG_{n}^{\ast },\ g\in \mathcal{G}\}$ 
converges weakly to a 
Gaussian limit with uniformly $L_1(F)$-continuous sample paths, almost surely.
\end{corollary}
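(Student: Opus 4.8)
The plan is to reuse, almost verbatim, the reasoning behind Corollary 7, since Corollary 8 is simply its analogue for causal linear sequences: I would verify that the cited external theorem supplies exactly the hypothesis of Theorem 6, and then combine Theorem 6 with Lemma 3 to pass from the right-continuous class $BV_T'$ to all of $BV_T$. No new estimate on the dependence structure of $\{X_i\}_{i\ge1}$ is required; the entire probabilistic burden is carried by El Ktaibi, Ivanoff and Weber (2014), and the task here is only to transport their statement about $\GG_n^*$ into one about $\int g\,\mathrm{d}\GG_n^*$.

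First I would check that El Ktaibi, Ivanoff and Weber (2014) establish precisely what Theorem 6 demands, namely that, conditionally on $X_1,\dots,X_n$, the bootstrap process $\{\GG_n^*(t),\ t\in\RR\}$ converges weakly, \emph{almost surely}, to a Gaussian process with uniformly continuous sample paths in the distance $d(s,t)=|F(s)-F(t)|$. Their framework is the short-memory causal linear sequence of Doukhan and Surgailis (1998), and exactly as in the proof of Corollary 4 the conditions on $\xi_0$ (in particular the decay of its characteristic function) render the stationary law $F$ continuous. Granting this, I would invoke part 2 of Theorem 6, the almost-sure statement, to conclude that $\{\int g\,\mathrm{d}\GG_n^*,\ g\in BV_T'\}$ converges weakly, almost surely, to a Gaussian limit in $\ell^\infty(BV_T')$ with uniformly $L_1(F)$-continuous paths.

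It remains only to drop the right-continuity restriction and cover every $g\in BV_T$, which is where Lemma 3 enters, in its bootstrap form: for each $g\in BV_T$ its right-continuous modification $h\in BV_T'$ satisfies $|\int g\,\mathrm{d}\GG_n^*-\int h\,\mathrm{d}\GG_n^*|\le T\sup_x|\GG_n^*(x)-\GG_n^*(x^-)|$, so the class $BV_T$ is approximated by $BV_T'$ uniformly up to the maximal jump of $\GG_n^*$. The main obstacle is therefore this final step, showing $\sup_x|\GG_n^*(x)-\GG_n^*(x^-)|\to0$ almost surely. In the non-bootstrap case continuity of $F$ settles it at once, since the observations are then a.s.\ distinct; but block resampling reintroduces ties and genuine atoms into $\GG_n^*$, so a direct counting argument is unavailable. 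Instead I would deduce the vanishing of the maximal jump from the almost-sure weak convergence already in hand: because the limit has jump-free paths and $F$ is continuous, an asymptotic-equicontinuity argument forces the largest jump of $\GG_n^*$ to be asymptotically negligible. Combining this with the displayed bound and the $BV_T'$ conclusion yields weak convergence over all of $BV_T$, almost surely, which is the claim.
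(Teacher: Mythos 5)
Your proposal is correct and follows essentially the same route as the paper: the paper's proof likewise cites El Ktaibi, Ivanoff and Weber (2014) for the (almost sure) conditional weak convergence of $\mathbb{G}_n^{\ast}$, notes that continuity of $F$ makes the limit uniformly continuous with respect to $d(s,t)=|F(s)-F(t)|$, and then invokes Theorem 6 together with Lemma 3 to pass from $BV_T'$ to $BV_T$. Your final paragraph merely makes explicit a detail the paper leaves implicit — that $\sup_x|\mathbb{G}_n^{\ast}(x)-\mathbb{G}_n^{\ast}(x^-)|$ vanishes despite the ties created by resampling, which indeed follows from asymptotic equicontinuity of $\mathbb{G}_n^{\ast}$ since $F$ is continuous — and this is a correct and welcome clarification rather than a different argument.
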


\begin{proof}
El Ktaibi,   Ivanoff and Weber (2014) establish weak
convergence of $\mathbb{G}_{n}^{\ast }$ in the Skorohod space, for
continuous $F,$ which in turn implies that the limiting process has
uniformly continuous sample paths with respect to distance $d(s,t)=|F(s)-F(t)|$. Again, invoke Theorem 6 and Lemma 3 to conclude the proof.
\end{proof}

\section{Proofs for Theorem 1 and Theorem 6}
We first give a short proof of Theorem 1 
if the limit $\mathbb{Z}(t)$ of $\mathbb{Z}_{n}(t)$ has  continuous sample paths with respect to the  Lebesgue measure. 
Let $d_{BL}$ be the bounded Lipschitz metric that metrizes  weak
convergence, see, e.g., Van der Vaart and  Wellner (1996, page 73) for the
definition. 
Set $\bar{\mathbb{Z}}_{n}(g)=\int
g\,\mathrm{d}\mathbb{Z}_{n}$ for any $g\in BV_{T}^{\prime }$. Assumptions A1 and A2 imply that the Lebesgue-Stieltjes integrals 
\begin{equation*}
\widetilde{\mathbb{Z}}_{n}(g)=-\int \mathbb{Z}_{n}\,\mathrm{d}g
\end{equation*}%
are well defined. Next, by the integration by parts formula in Lemma A, we
have 
\begin{equation*}
\bar{\mathbb{Z}}_{n}(g)=\tilde{\mathbb{Z}}_{n}(g)+R_{n}(g)
\end{equation*}%
with 
\begin{equation*}
R_{n}(g)\leq T\sup_{x}|\mathbb{Z}_{n}(x)-\mathbb{Z}_{n}(x^{-})|\rightarrow 0,
\end{equation*}%
in probability, as $n\rightarrow \infty $, since $\mathbb{Z}$ has uniformly continuous sample paths.
For fixed $g$, $\tilde{\mathbb{Z}}_{n}(g)$ converges weakly to $\tilde{%
\mathbb{Z}}(g):=-\int \mathbb{Z}\,\mathrm{d}g$ by the continuous mapping
theorem and weak convergence of $\mathbb{Z}_{n}$. The continuous mapping
theorem also guarantees that the limit $\tilde{\mathbb{Z}}$ is tight in $%
\ell ^{\infty }(\mathcal{G})$ as the map 
$X\mapsto \Gamma _{g}(X)=-\int
X\,\mathrm{d}g$, $g\in \mathcal{G}$, is continuous. By the triangle
inequality, 
\begin{eqnarray*}
d_{BL}(\bar{\mathbb{Z}}_{n},\tilde{\mathbb{Z}}) &\leq &d_{BL}(\bar{\mathbb{Z}%
}_{n},\tilde{\mathbb{Z}}_{n})+d_{BL}(\tilde{\mathbb{Z}}_{n},\tilde{\mathbb{Z}%
}) \\
&=&\sup_{H\in BL_{1}}|\mathbb{E}H(\bar{\mathbb{Z}}_{n})-\mathbb{E}H(\tilde{%
\mathbb{Z}}_{n})|+\sup_{H\in BL_{1}}|\mathbb{E}H(\tilde{\mathbb{Z}}_{n})-%
\mathbb{E}H(\tilde{\mathbb{Z}})| \\
&\leq &T\mathbb{E}[\sup_{x}|\mathbb{Z}_{n}(x)-\mathbb{Z}_{n}(x^{-})|]+T%
\sup_{H^{\prime }\in BL_{1}}|\mathbb{E}H^{\prime }(\mathbb{Z}_{n})-\mathbb{E}%
H^{\prime }(\mathbb{Z})| \\
&=&T\mathbb{E}[\sup_{x}|\mathbb{Z}_{n}(x)-\mathbb{Z}_{n}(x^{-})|]+Td_{BL}(%
\mathbb{Z}_{n},\mathbb{Z})
\end{eqnarray*}%
The second inequality follows since the map $\Gamma _{f}(X):=\int X\ \mathrm{d}f$
is linear and Lipschitz with Lipschitz constant $\int \,|\mathrm{d}f|\leq T$ and the
suprema are taken over all Lipschitz functionals $H:\ell ^{\infty }(\mathcal{%
G})\rightarrow \mathbb{R}$ with $\Vert H\Vert _{\infty }\leq 1$ and $%
|H(X)-H(Y)|\leq \Vert X-Y\Vert _{\infty }$ and $H^{\prime }:\ell ^{\infty }(%
\mathbb{R})\rightarrow \mathbb{R}$ with $\Vert H^{\prime }\Vert _{\infty
}\leq 1$ and $|H^{\prime }(X)-H^{\prime }(Y)|\leq \Vert X-Y\Vert _{\infty }$%
, respectively. Together with the tightness of the limit $\tilde{\mathbb{Z}}$%
, this implies that the empirical process $\bar{\mathbb{Z}}_{n}(g)$ indexed
by $g\in \mathcal{G}\subset BV_{T}'$ converges weakly.

\bigskip

\bigskip The above proof for continuous limit processes $\ZZ$ (that is, processes $\ZZ$ with uniformly continuous sample paths)  is
rather simple.  
Nevertheless, we could not find an
actual publication of this trick. We
would like to stress that the extension of this proof to general, non-continuous processes, is not trivial. Technical complications related to the
interplay between the atoms of the limiting process $\ZZ(t)$ and
discontinuities of $g\in \mathcal{G},$ require some care.

\begin{lemma}
Assume that  
$\mathbb{Z}_{n}(t)$
converges weakly to a Gaussian process 
$\mathbb{Z}(t)$, as $n\to\infty$.
Then, for any right continuous function $g:\mathbb{R}
\rightarrow \mathbb{R}$\textit{\ of bounded variation }$\overline{\mathbb{Z}}
_{n}(g)$\textit{=}$\int g\, {\rm d} \mathbb{Z}_{n}$\textit{\ is well defined, and
converges to a normal distribution on }$\mathbb{R}.$
\end{lemma}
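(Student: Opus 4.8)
The plan is to rewrite $\bar{\ZZ}_n(g)$, via integration by parts, as a single Lipschitz functional of the whole sample path of $\ZZ_n$, and then to read off both assertions from the continuous mapping theorem together with the assumed weak convergence of $\ZZ_n$ to $\ZZ$.

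First I would dispose of well-definedness. A right-continuous function $g$ of bounded variation on $\RR$ is bounded and induces a finite signed Lebesgue--Stieltjes measure $\mathrm{d}g$ with $\int|\mathrm{d}g|=\|g\|_{TV}<\infty$. By A2 the paths of $\ZZ_n$ are right-continuous and of bounded variation, hence also induce a finite signed measure, so $\bar{\ZZ}_n(g)=\int g\,\mathrm{d}\ZZ_n$ is a well-defined Lebesgue--Stieltjes integral (a bounded integrand against a finite-variation integrator); this settles the first assertion. Next I would apply the integration-by-parts formula (Lemma A) on $(-a,a]$ and let $a\to\infty$, so that the boundary terms $\ZZ_n(\pm a)g(\pm a)$ vanish by A1 and boundedness of $g$. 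I would use the asymmetric form that places the left limit on $\ZZ_n$, yielding the exact identity
\[
\bar{\ZZ}_n(g)=-\int \ZZ_n(x^-)\,\mathrm{d}g(x).
\]
Writing $\Phi(X):=-\int X(x^-)\,\mathrm{d}g(x)$, this reads $\bar{\ZZ}_n(g)=\Phi(\ZZ_n)$ with no remainder. (Equivalently one may keep the symmetric form $\bar{\ZZ}_n(g)=-\int\ZZ_n\,\mathrm{d}g+R_n(g)$ with jump correction $R_n(g)=\sum_x\Delta\ZZ_n(x)\,\Delta g(x)$, but carrying $R_n$ separately is exactly what creates trouble, so I would absorb it into the single functional $\Phi$.)

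Then I would run the continuous mapping argument. The functional $\Phi$ is Lipschitz for the supremum norm: since $\sup_x|X(x^-)-Y(x^-)|\le\|X-Y\|_\infty$, we get $|\Phi(X)-\Phi(Y)|\le\|g\|_{TV}\,\|X-Y\|_\infty\le T\|X-Y\|_\infty$. Thus $\Phi$ is continuous on the space of functions admitting left limits, which contains every path of $\ZZ_n$ (right-continuous with left limits by A2) and, I will argue, the paths of the limit $\ZZ$. The continuous mapping theorem then gives
\[
\bar{\ZZ}_n(g)=\Phi(\ZZ_n)\indistrto \Phi(\ZZ)=-\int \ZZ(x^-)\,\mathrm{d}g(x).
\]
To finish I must check that $\Phi(\ZZ)$ is normal. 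It is the limit, in distribution and in $L_2$, of the Riemann--Stieltjes sums $-\sum_k \ZZ(\xi_k^-)\,[g(t_{k+1})-g(t_k)]$, each of which is a finite linear combination of the jointly Gaussian variables $\ZZ(\xi_k^-)$ and hence normal; since the family of normal laws is closed under weak limits, $\Phi(\ZZ)$ is normal, establishing the second assertion.

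The main obstacle is precisely the point the authors flag for the non-continuous case: the interplay between the jumps of $\ZZ_n$ (and of the limit $\ZZ$) and the discontinuities of $g$, which is what forces the left-limit bookkeeping in $\Phi$. For one fixed $g$ this is benign, since the exact identity above isolates it cleanly; but I must still verify that $\ZZ$ admits left limits along its paths, so that $\Phi$ is defined and continuous on a set carrying the limit law (equivalently, that the discontinuity set of $\Phi$ is $\ZZ$-null), and that the Riemann--Stieltjes approximation of $\int\ZZ(x^-)\,\mathrm{d}g$ indeed converges. These are exactly the places where the later, uniform-in-$g\in\G$ version of the argument becomes delicate.
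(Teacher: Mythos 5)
Your proposal is correct and takes essentially the same route as the paper: the paper's proof likewise applies the integration-by-parts formula (Lemma A) followed by the continuous mapping theorem, writing $\int g\,{\rm d}\mathbb{Z}_n=T_1(\mathbb{Z}_n)+T_2(\mathbb{Z}_n)$ with $T_1(\mathbb{Z}_n)=-\int \mathbb{Z}_n\,{\rm d}g$ and the common-jump term $T_2(\mathbb{Z}_n)=\sum_i\bigl(g(a_i)-g(a_i^-)\bigr)\bigl(\mathbb{Z}_n(a_i)-\mathbb{Z}_n(a_i^-)\bigr)$, which is precisely your single functional $\Phi(\mathbb{Z}_n)=-\int \mathbb{Z}_n(x^-)\,{\rm d}g(x)$ split into two continuous linear operators (continuity of $T_2$ resting on $\sum_i|\alpha_i|\le\|g\|_{TV}$, matching your Lipschitz bound). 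The residual verifications you honestly flag (existence of left limits for the paths of the limit $\mathbb{Z}$, Gaussianity of $\Phi(\mathbb{Z})$) are passed over just as quickly in the paper, which simply invokes linearity and continuity of $T_1,T_2$ together with the continuous mapping theorem.
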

\begin{proof} Let $g$ be an arbitrary right-continuous function
of bounded variation. First, we notice that $\int g\, {\rm d} \mathbb{Z}_{n}$ and $
\int \mathbb{Z}_{n}\, {\rm d} g$ are indeed well defined as Lebesgue-Stieltjes
integrals. Recall that $g$ can have only countably many discontinuities
which we will denote $a_{i}.$ By the integration by parts formula, Lemma A in
the appendix, we have
\begin{equation*}
\int g(x)\, {\rm d}\mathbb{Z}_{n}(x)=T_{1}(\mathbb{Z}_{n})+T_{2}(\mathbb{Z}_{n})
\end{equation*}
with operators $T_{1},T_{2}:\ell^{\infty }(\mathbb{R})\rightarrow \mathbb{R}$
\begin{eqnarray*}
T_{1}(\mathbb{Z}_{n})&:=&-\int \mathbb{Z}_{n}(x)\, {\rm d} g(x)
\\
T_{2}(\mathbb{Z}_{n})&:=&\int \int 1_{x=y}\, {\rm d} g(x)\, {\rm d}\mathbb{Z}_{n}(y)
=\sum_{i}(g(a_{i})-g(a_{i}^{-}))(\mathbb{Z}_{n}(a_{i})-\mathbb{Z}_{n}(a_{i}^{-})).
\end{eqnarray*}
Since $g$ has finite variation, it is bounded with   $\sum_{i}|\alpha _{i}|<\infty $  
for the jumps $\alpha_{i}:=g(a_{i})-g(a_{i}^{-})$.
 Hence, the operator $T_2$, given by
\begin{equation*}
T_{2}(\mathbb{Z}_{n})=\sum_{i}\alpha _{i}(\mathbb{Z}_{n}(a_{i})-\mathbb{Z}_{n}(a_{i}^{-}))
\end{equation*}
is linear and continuous. We conclude the proof by observing that the continuity of the
operators $T_{1}$ and $T_{2}$ and the weak convergence of $\mathbb{Z}_{n}(t)$
to a Gaussian process $\mathbb{Z}$ ensures that these sequences $Y_{n}:=\int
g \, {\rm d}\mathbb{Z}_{n}=T_{1}(\mathbb{Z}_{n})+T_{2}(\mathbb{Z}_{n})$ converge
weakly to a  normal distribution via the continuous mapping theorem.
\end{proof}
\bigskip

For any distribution function $F_{0}$ and any $\beta >0$, we define 
\begin{eqnarray*}
\Psi _{\beta ,F_{0}}(\mathbb{Z}_{n}):= &&\sup_{|F_{0}(s)-F_{0}(t)|\leq \beta
}\left\vert \mathbb{Z}_{n}(s)-\mathbb{Z}_{n}(t)\right\vert , \\
{\mathbb{g}}_{\beta ,F_{0}}(\mathbb{Z}_{n}):=
&&\sup_{F_{0}(s)-F_{0}(s^{-})>\beta }\left\vert \mathbb{Z}_{n}(s)-\mathbb{Z}_{n}(s^{-})\right\vert ,
\end{eqnarray*}
with the convention that the supremum taken over the empty set is equal to zero.

Clearly, ${\mathbb{g}}_{\beta ,F_{0}}(\mathbb{Z}_{n})=0$ for all $\beta >0$
if $F_{0}$ is continuous. In general, for arbitrary $F_{0}$, the quantity ${\mathbb{g}}_{\beta ,F_{0}}(\mathbb{Z}_{n})$ is bounded in probability, for
all $\beta >0$, by the continuous mapping theorem, as long as $\mathbb{Z}_{n} $ converges weakly. The following lemma  plays an instrumental role and it could be perhaps of  
independent interest.  

\begin{lemma}[Decoupling Lemma]
For any distribution function  $F_{0}$  on  $\mathbb{R}$, any right-continuous function 
$g:\mathbb{R}\rightarrow \mathbb{R}$ and any $\beta >0$, we
have
\begin{equation*}
|\bar{\mathbb{Z}}_{n}(g)|\leq \left\{ 2\beta
^{-1}||g||_{L_{1}(F_{0})}+6||g||_{TV}\right\} \Psi _{2\beta ,F_{0}}(\mathbb{Z}_{n})+\beta ^{-1}\Vert g\Vert _{L_{1}(F_{0})}{\mathbb{g}}_{\beta ,F_{0}}(
\mathbb{Z}_{n}).
\end{equation*}
Here $\Vert g\Vert _{L_{1}(F_{0})}=\int |g|\,\, {\rm d} F_{0}$
and $\mathbb{Z}_{n}$ satisfies assumptions A1 and A2.
\end{lemma}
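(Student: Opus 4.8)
The plan is to reduce the Lebesgue--Stieltjes integral $\bar{\mathbb{Z}}_n(g)=\int g\,\mathrm{d}\mathbb{Z}_n$ to a finite sum of increments of $\mathbb{Z}_n$ taken over blocks on which $F_0$ grows by at most $\beta$, and to control everything through the modulus $\Psi_{2\beta,F_0}$ and the atom functional $\mathbb{g}_{\beta,F_0}$. First I would build an $F_0$-adapted grid $-\infty=x_0<x_1<\cdots<x_m<x_{m+1}=+\infty$ from the $\beta$-quantiles $x_j=\inf\{t:F_0(t)\ge j\beta\}$. Because $F_0$ has total mass $1$ there are at most $\lfloor 1/\beta\rfloor$ such points, the consecutive (distinct) quantiles satisfy the pre-jump bound $F_0(x_j^-)-F_0(x_{j-1})\le\beta$, the blocks carry $F_0$-mass at least $\beta$ (with a convention, discussed below, for charging an atom sitting at a block boundary), and every atom of $F_0$ of mass exceeding $\beta$ occurs at a grid point. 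On this grid I split $\bar{\mathbb{Z}}_n(g)=\int \bar g\,\mathrm{d}\mathbb{Z}_n+\int(g-\bar g)\,\mathrm{d}\mathbb{Z}_n$, where $\bar g=\sum_j c_j\,1_{(x_{j-1},x_j]}$ is a step function whose block value $c_j=g(\xi_j)$ is a representative of $g$ chosen in the next steps.

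For the fluctuation remainder $\int(g-\bar g)\,\mathrm{d}\mathbb{Z}_n$ I would work block by block. Writing $g(x)-g(\xi_j)$ as the $\mathrm{d}g$-integral over the points between $\xi_j$ and $x$ (with sign) and applying Fubini to the product of the signed measures $\mathrm{d}g$ and $\mathrm{d}\mathbb{Z}_n$ turns each block integral into $-\int(\mathbb{Z}_n(u^-)-\mathbb{Z}_n(\xi_j))\,\mathrm{d}g(u)$. Since any such $u^-$ and $\xi_j$ lie at $F_0$-distance at most $\beta$, the integrand is bounded by $\Psi_{2\beta,F_0}(\mathbb{Z}_n)$; here the slack between $\beta$ and $2\beta$ is precisely what absorbs the left limits $\mathbb{Z}_n(u^-)$. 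Bounding $\int|\mathrm{d}g|$ over block $j$ by the variation $V_j$ and using $\sum_j V_j\le\|g\|_{TV}$ produces a term of the form $(\mathrm{const})\,\|g\|_{TV}\,\Psi_{2\beta,F_0}(\mathbb{Z}_n)$.

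The step part is $\int \bar g\,\mathrm{d}\mathbb{Z}_n=\sum_j c_j\,(\mathbb{Z}_n(x_j)-\mathbb{Z}_n(x_{j-1}))$. When $x_j$ is not a big atom the full increment runs over $F_0$-distance at most $2\beta$, hence is $\le\Psi_{2\beta,F_0}$; when $x_j$ is a big atom I split $\mathbb{Z}_n(x_j)-\mathbb{Z}_n(x_{j-1})=(\mathbb{Z}_n(x_j)-\mathbb{Z}_n(x_j^-))+(\mathbb{Z}_n(x_j^-)-\mathbb{Z}_n(x_{j-1}))$, the first piece being $\le\mathbb{g}_{\beta,F_0}$ and the second $\le\Psi_{2\beta,F_0}$. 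To recover the $\|g\|_{L_1(F_0)}$ factors I choose $\xi_j$ so that $|g(\xi_j)|$ lies below the $F_0$-average of $|g|$ on the block, giving $|c_j|\le\beta^{-1}\int_{\text{block }j}|g|\,\mathrm{d}F_0$ (this is where block mass $\ge\beta$ enters), whence $\sum_j|c_j|\le 2\beta^{-1}\|g\|_{L_1(F_0)}$; at a big atom I instead evaluate $g$ at the atom itself and use that its mass exceeds $\beta$ to obtain $\sum_{\text{big}}|g(x_j)|\le\beta^{-1}\|g\|_{L_1(F_0)}$, which yields exactly the coefficient $\beta^{-1}\|g\|_{L_1(F_0)}$ in front of $\mathbb{g}_{\beta,F_0}$. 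Collecting the $\Psi_{2\beta,F_0}$-contributions of both the fluctuation and step parts assembles the bracket $\{2\beta^{-1}\|g\|_{L_1(F_0)}+(\mathrm{const})\,\|g\|_{TV}\}$.

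I expect the main obstacle to be exactly the bookkeeping forced by the atoms of $F_0$: arranging the blocks so that they simultaneously have $F_0$-mass bounded below by $\beta$ (needed for the $L_1$ accounting) and pre-jump mass at most $\beta$ (needed for the $\Psi_{2\beta,F_0}$ bounds), deciding to which block a boundary atom is charged, and isolating the big atoms so their jumps feed $\mathbb{g}_{\beta,F_0}$ rather than $\Psi_{2\beta,F_0}$. The same delicacy — left limits $\mathbb{Z}_n(\cdot^-)$ surfacing both through Fubini and through the atom splitting — is what dictates widening $\Psi_{\beta,F_0}$ to $\Psi_{2\beta,F_0}$, and carefully tracking these choices is what pins down the explicit constants $2$ and $6$. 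Assumption A1 is used only to render the two extreme blocks harmless, since $\mathbb{Z}_n(\pm\infty)=0$, while A2 guarantees that the Stieltjes integrals and the integration-by-parts/Fubini manipulations above are legitimate.
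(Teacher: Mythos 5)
Your plan is, at bottom, the paper's own proof in dual form. The paper freezes the \emph{process}, approximating $\mathbb{Z}_n$ by the step process $\widetilde{\mathbb{Z}}_n(t)=\sum_j \mathbb{Z}_n(s_{j-1})1_{[s_{j-1},s_j)}(t)$ on an $F_0$-adapted grid and controlling $\int g\,\mathrm{d}(\mathbb{Z}_n-\widetilde{\mathbb{Z}}_n)$ via the global integration-by-parts formula (Lemma A), whereas you freeze the \emph{function}, approximating $g$ by a step function $\bar g$ and controlling $\int (g-\bar g)\,\mathrm{d}\mathbb{Z}_n$ blockwise by Fubini. Both routes then reduce to the same finite sum $\sum_j c_j\bigl(\mathbb{Z}_n(x_j)-\mathbb{Z}_n(x_{j-1})\bigr)$, the same splitting of a big-atom increment into a $\Psi_{2\beta,F_0}$ piece plus a ${\mathbb{g}}_{\beta ,F_{0}}$ piece, and the same two $L_1$ accounting devices: a near-infimum block representative (the paper's $g^{\ast}$) on ordinary blocks, and the atom mass $>\beta$ at big atoms to get the coefficient $\beta^{-1}\Vert g\Vert_{L_1(F_0)}$ in front of ${\mathbb{g}}_{\beta ,F_{0}}(\mathbb{Z}_n)$. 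One caution on your Fubini step: it produces the pairing $\mathbb{Z}_n(x_j)-\mathbb{Z}_n(u^-)$ at the right endpoint of a block, and when $x_j$ carries a big atom this pair is \emph{not} within $F_0$-distance $2\beta$, so your blanket claim about the integrand fails there. Your own choice $\xi_j=x_j$ at big atoms actually dissolves the problem (with $\xi_j$ at the right endpoint, the half of the Fubini identity containing that pairing is empty, and the surviving half involves only $F_0$-distances at most $\beta$), but you need to make this case analysis explicit.

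The one step that fails as stated is the grid. With fixed quantile levels $x_j=\inf\{t:F_0(t)\ge j\beta\}$, the block-mass lower bound $F_0(x_j)-F_0(x_{j-1})\ge\beta$ is false in the presence of atoms: take $\beta=0.3$ and $F_0$ with an atom of mass $0.5$ at $0$ followed by a continuous part; then $x_1=0$ with $F_0(x_1)=0.5$, and the next distinct quantile $x_2$ satisfies $F_0(x_2)-F_0(x_1)=0.1<\beta$. On such a block your key estimate $|c_j|\le\beta^{-1}\int_{\mathrm{block}}|g|\,\mathrm{d}F_0$ collapses, and the boundary-atom ``charging convention'' you allude to cannot rescue it: if $g$ vanishes at the atom but is of order $1$ on the thin block, adding the atom's mass to the block does not lift the integral. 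The repair is exactly the grid the paper asserts, delivered by a recursive construction: $s_j=\inf\{t:F_0(t)\ge F_0(s_{j-1})+\beta\}$, which by right continuity gives $F_0(s_j)-F_0(s_{j-1})\ge\beta$ and $F_0(s_j^-)-F_0(s_{j-1})\le\beta$ simultaneously, terminates in at most $\lceil 1/\beta\rceil$ steps, and still places every atom of mass $>\beta$ at a grid point. With that one-line change and the case analysis above, your argument goes through and yields the bracket $\bigl\{2\beta^{-1}\Vert g\Vert_{L_1(F_0)}+C\Vert g\Vert_{TV}\bigr\}\Psi_{2\beta,F_0}(\mathbb{Z}_n)+\beta^{-1}\Vert g\Vert_{L_1(F_0)}{\mathbb{g}}_{\beta ,F_{0}}(\mathbb{Z}_n)$; note, though, that you left the total-variation constant as an unspecified $C$, so matching the stated $6$ still requires tallying the $\Vert g\Vert_{TV}$ contributions from the fluctuation term and from the small-atom increments, as the paper does.
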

\begin{proof}
 Without loss of generality we can assume
that $||g||_{L_{1}(F_{0})}+||g||_{TV}<\infty .$ Since $F_{0}$ is a
distribution function we can construct, for any $0<\beta <1,$ a finite grid $-\infty =s_{0}<s_{1}<...<s_{M}<\infty $ such that
\begin{equation*}
F_{0}(s_{j})-F_{0}(s_{j-1})\geq \beta ,\text{ }F_{0}(s_{M})<1-2\beta
\end{equation*}
and 
\begin{equation*}
F_{0}(s_{j}^{-})-F_{0}(s_{j-1})\leq 2\beta ,
\end{equation*}
leaving the possible jumps $F_{0}(s_{j})-F_{0}(s_{j}^{-})$ unspecified$.$
Based on this grid we approximate $\mathbb{Z}_{n}(t)$ by 
\begin{equation*}
\widetilde{\mathbb{Z}}_{n}(t):=\sum_{i=1}^{M}\mathbb{Z}_{n}(s_{j-1})1_{[s_{j-i,}s_{j})}(t)
\end{equation*}
and we set $\widetilde{\mathbb{Z}}_{n}(\pm \infty )=0.$ We observe that by
construction 
\begin{eqnarray}
\sup_{x}|\mathbb{Z}_{n}(x)-\widetilde{\mathbb{Z}}_{n}(x)| &\leq& \max_{1\leq
j\leq M}\sup_{x\in \lbrack s_{j-1},s_{j})}|\mathbb{Z}_{n}(x)-\mathbb{Z}_{n}(s_{j-1})|+\sup_{x\geq s_{M}}|\mathbb{Z}_{n}(x)|\nonumber\\
&\leq& \sup_{|F_{0}(x)-F_{0}(y)|\leq 2\beta }|\mathbb{Z}_{n}(x)-\mathbb{Z}_{n}(y)|=\Psi _{\beta ,F_{0}}(\mathbb{Z}_{n})  \label{SupB}
\end{eqnarray}
Since the process $\widetilde{\mathbb{Z}}_{n}$ inherits the bounded
variation properties of $\mathbb{Z}_{n},$
\begin{equation*}
\int g(s)\, {\rm d} \mathbb{Z}_{n}(s)=\int g(s)\, {\rm d} \widetilde{\mathbb{Z}}_{n}(s)+\int
g(s)\, {\rm d} (\mathbb{Z}_{n}-\widetilde{\mathbb{Z}}_{n})(s).
\end{equation*}
is well defined for any right-continuous function $g$ of bounded variation.
Using the integration by parts formula (Lemma A in the appendix) we obtain for the last term
on the right
\begin{equation*}
\int g(s)\, {\rm d} (\mathbb{Z}_{n}-\widetilde{\mathbb{Z}}_{n})(s)=-\int (\mathbb{Z}_{n}-\widetilde{\mathbb{Z}}_{n})(s)\, 
{\rm d} g(s)+\int \int 1_{x=y}\, {\rm d} g(x)\, {\rm d} (\mathbb{Z}_{n}-\widetilde{\mathbb{Z}}_{n})(y).
\end{equation*}
Since $g$ is of bounded variation, it has countably many discontinuities $a_{i}$.
 Using (\ref{SupB}), we obtain 
\begin{eqnarray*}
\left\vert \int \int 1_{x=y}\, {\rm d} g(x)\, {\rm d}(\mathbb{Z}_{n}-\widetilde{\mathbb{Z}}_{n})(y)\right\vert
&\leq& \sum_{i}|g(a_{i})-g(a_{i}^{-})|\left\vert (\mathbb{Z}_{n}-\widetilde{
\mathbb{Z}}_{n})(a_{i})-(\mathbb{Z}_{n}-\widetilde{\mathbb{Z}}_{n})(a_{i}^{-})\right\vert \\
&\leq& 2\left\Vert g\right\Vert _{TV}\Psi _{2\beta ,F_{0}}(\mathbb{Z}_{n}).
\end{eqnarray*}
Consequently,
\begin{equation*}
\left\vert \int g(s)\, {\rm d} (\mathbb{Z}_{n}-\widetilde{\mathbb{Z}}
_{n})(s)\right\vert \leq 3\left\Vert g\right\Vert _{TV}\Psi _{2\beta ,F_{0}}(
\mathbb{Z}_{n})
\end{equation*}
 Next we deal with the finite dimensional approximation 
\begin{equation*}
\int g(s)d\widetilde{\mathbb{Z}}_{n}(s)=\sum_{j=1}^{M}g(s_{j})(\mathbb{Z}_{n}(s_{j})-\mathbb{Z}_{n}(s_{j-1})).
\end{equation*}
Clearly,
\begin{eqnarray}
\left\vert \int g(s)d\widetilde{\mathbb{Z}}_{n}(s)\right\vert
\leq \left\vert \sum_{j=1}^{M}g(s_{j})(\mathbb{Z}_{n}(s_{j}^{-})-\mathbb{Z}_{n}(s_{j-1}))\right\vert 
+\left\vert \sum_{j=1}^{M}g(s_{j})(\mathbb{Z}_{n}(s_{j})-\mathbb{Z}_{n}(s_{j}^{-}))\right\vert  \label{Fidi}
\end{eqnarray}
For the first term in (\ref{Fidi}) we introduce the step function 
\begin{equation*}
g^{\ast }(t)=\sum_{j=1}^{M}1_{(s_{j-1},s_{j}]}(t)\inf_{s_{j-1}<s\leq
s_{j}}|g(s)|.
\end{equation*}
designed to approximate $|g(t)|.$ Clearly $0\leq g^{\ast }(t)\leq |g(t)|$
and 
\begin{eqnarray*}
\sum_{j=1}^{M}g^{\ast }(s_{j}) &\leq& \beta ^{-1}\sum_{j=1}^{M}g^{\ast
}(s_{j})(F_{0}(s_{j})-F_{0}(s_{j-1}))
\\
&=&\beta ^{-1}\int g^{\ast }\, {\rm d}F_{0}\\ &\leq& \beta ^{-1}\int |g|\, {\rm d}F_{0}=\beta
^{-1}||g||_{L_{1}(F_{0})}
\end{eqnarray*}
Hence, by (\ref{SupB}) we have 
\begin{eqnarray}
\left\vert \sum_{j=1}^{M}g(s_{j})(\mathbb{Z}_{n}(s_{j}^{-})-\mathbb{Z}
_{n}(s_{j-1}))\right\vert  &\leq& \Psi _{2\beta ,F_{0}}(\mathbb{Z}_{n})\left[
\sum_{j=1}^{M}(|g(s_{j})|-g^{\ast }(s_{j}))+\sum_{j=1}^{M}g^{\ast }(s_{j}))
\right]  \nonumber
\\
&\leq& \Psi _{2\beta ,F_{0}}(\mathbb{Z}_{n})(\left\Vert g\right\Vert
_{TV}+\beta ^{-1}||g||_{L_{1}(F_{0})})  \label{Comp1}
\end{eqnarray}
For the second term  in (\ref{Fidi}), we have
\begin{eqnarray}\label{vijf}
&& \left\vert \sum_{j=1}^{M}g(s_{j})\left( \mathbb{Z}_{n}(s_{j}))-\mathbb{Z}_{n}(s_{j}^{-})\right) \right\vert \nonumber\\
&&\le  \left\vert \sum_{j:\ F_0(s_j)- F_0(s_j^{-} ) \le \beta}
g(s_{j})\left( \mathbb{Z}_{n}(s_{j}))-\mathbb{Z}_{n}(s_{j}^{-})\right) \right\vert +
 \left\vert \sum_{j:\ F_0(s_j)- F_0(s_j^{-} ) > \beta}g(s_{j})\left( \mathbb{Z}_{n}(s_{j}))-\mathbb{Z}_{n}(s_{j}^{-})\right) \right\vert \nonumber\\
&&\le \Psi _{2\beta ,F_0}(\mathbb{Z}_{n}) 
\left( 2||g||_{TV}+\beta^{-1}||g||_{L_1(F_0)}\right) + {\mathbb{g} }_{\beta,F_0}( \mathbb{Z}_{n}) \beta^{-1} \| g\|_{L_1(F_0)},
\end{eqnarray}
 using for the last inequality
\[ \sum_{j=1}^M | g(s_j)| \le 2 \|g\|_{TV} +\beta^{-1}\| g\|_{L_1(F_0)}\]   and
\begin{eqnarray*}
   \sum_{j=1}^M  |g(s_{j}) | 1\{  F_0(s_j)- F_0(s_j^{-}) > \beta \} &\le &\beta^{-1} \sum_{j=1}^M | g(s_j) | |(F_0(s_j)- F_0(s_j^{-})|\\
&\le&  \beta^{-1} \| g\|_{L_1(F_0)}.\end{eqnarray*} 
Lemma 10 now follows by combining the estimates (\ref{SupB})
and (\ref{Fidi}) and (\ref{Comp1}).
\end{proof}
 
 \medskip
An immediate corollary is the following result.

\begin{corollary}
For any distribution function $F_{0}$ and for all  
$T<\infty $, $\delta >0$  and  $p\geq 1$, we have 
\begin{equation*}
\sup_{g}\left\vert \int g\,\mathrm{d}\mathbb{Z}_{n}\right\vert \leq (2\sqrt{\delta 
}+6T)\Psi _{2\sqrt{\delta },F_{0}}(\mathbb{Z}_{n})+{\mathbb{g}}_{\sqrt{ 
\delta },F_{0}}(\mathbb{Z}_{n})\sqrt{\delta },
\end{equation*}
where   the $\sup$ is taken over all right-continuous
functions $g$ with $\Vert g\Vert _{TV}\leq
T$ and $\Vert g\Vert _{L_{p}(F_{0})} = \left( \int |g|^{p}\mathrm{d}
F_{0}\right) ^{1/p}\leq \delta$.
\end{corollary}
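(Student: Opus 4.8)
The plan is to invoke the Decoupling Lemma (Lemma 10) directly, with a single well-chosen value of $\beta$, and then to pass to the supremum over the admissible class of functions $g$. Observe that the right-hand side of Lemma 10 depends on $g$ only through the two norms $\|g\|_{TV}$ and $\|g\|_{L_1(F_0)}$; since the constraints defining the supremum control both of these quantities uniformly, the bound will be uniform in $g$ and the supremum may be taken termwise. Thus no new estimate is needed beyond Lemma 10 itself.

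First I would bound $\|g\|_{L_1(F_0)}$ in terms of the $L_p$-norm. Because $F_0$ is a distribution function, $\mathrm{d}F_0$ is a probability measure, so the $L_p(F_0)$-norms are nondecreasing in $p$; in particular $\|g\|_{L_1(F_0)}\leq \|g\|_{L_p(F_0)}\leq \delta$ for every $p\geq 1$ (Jensen's inequality applied to $t\mapsto |t|^p$, or H\"older against the constant function $1$). Together with the constraint $\|g\|_{TV}\leq T$, this controls both of the $g$-dependent factors appearing in Lemma 10.

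Next I would make the balancing choice $\beta=\sqrt{\delta}$. Substituting into the Decoupling Lemma gives $2\beta^{-1}\|g\|_{L_1(F_0)}\leq 2\delta^{-1/2}\delta=2\sqrt{\delta}$ and $\beta^{-1}\|g\|_{L_1(F_0)}\leq \sqrt{\delta}$, while $\Psi_{2\beta,F_0}(\mathbb{Z}_n)=\Psi_{2\sqrt{\delta},F_0}(\mathbb{Z}_n)$ and $\mathbb{g}_{\beta,F_0}(\mathbb{Z}_n)=\mathbb{g}_{\sqrt{\delta},F_0}(\mathbb{Z}_n)$. Combining $6\|g\|_{TV}\leq 6T$ with the first estimate yields exactly the factor $(2\sqrt{\delta}+6T)$ in front of $\Psi_{2\sqrt{\delta},F_0}(\mathbb{Z}_n)$, and the second estimate produces the term $\sqrt{\delta}\,\mathbb{g}_{\sqrt{\delta},F_0}(\mathbb{Z}_n)$. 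Taking the supremum over all admissible $g$ then gives the stated inequality.

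Since this is labelled an immediate corollary, there is no genuine obstacle. The only two points that require a moment's thought are the monotonicity of the $L_p(F_0)$-norms, which rests essentially on $F_0$ being a probability measure, and the recognition that $\beta=\sqrt{\delta}$ is the choice that trades the $\beta^{-1}$ blow-up of the $L_1$-term against the fixed scale $T$ of the total-variation term, producing matching powers $\sqrt{\delta}$ in both resulting contributions.
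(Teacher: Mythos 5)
Your proposal is correct and coincides with the paper's own proof: both apply the Decoupling Lemma (Lemma 10) with the choice $\beta=\sqrt{\delta}$ and use the monotonicity $\Vert g\Vert_{L_{1}(F_{0})}\leq \Vert g\Vert_{L_{p}(F_{0})}\leq\delta$ for the probability measure $\mathrm{d}F_{0}$, after which the stated bound follows by substitution, uniformly in $g$. You merely spell out the arithmetic that the paper leaves to the reader.
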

\begin{proof}
 The proof follows trivially from Lemma 10 by
taking $\beta =\sqrt{\delta }$ and observing that $\Vert g\Vert
_{L_{1}(F_{0})}\leq \Vert g\Vert _{L_{p}(F_{0})}$ for $p\geq 1.$
\end{proof}

\subsection*{Proof of Theorem 1.}
 
First we recall, see, for instance, Chapter 1.5 in Van der Vaart and  Wellner (1996), that   $\{\bar{\mathbb{Z}}_{n}(g)$, $g\in \mathcal{G}\}$
converges weakly to a tight limit in $\ell ^{\infty }(\mathcal{G})$, provided
\begin{itemize}
\item[(a)] the marginals $(\bar{\mathbb{Z}}_n(g_1),\ldots, \bar{\mathbb{Z}}
_n(g_k))$ converge weakly for every finite subset $g_1,\ldots,g_k\in\mathcal{
G}$, and
\item[(b)] there exists a semi-metric $\rho $ on $\mathcal{G}$ such that $(
\mathcal{G},\rho )$ is totally bounded and $\bar{\mathbb{Z}}_{n}(g)$ is $
\rho $-stochastically equicontinuous, that is, 
\begin{equation*}
\lim_{\delta \downarrow 0}\limsup_{n\rightarrow \infty }\mathbb{P}\left\{
\sup_{\rho (f,g)\leq \delta }|\bar{\mathbb{Z}}_{n}(f)-\bar{\mathbb{Z}}
_{n}(g)|>\varepsilon \right\} =0
\end{equation*}
for all $\varepsilon >0$.
\end{itemize}
The finite dimensional convergence (a) follows trivially from Lemma 9,
linearity of the process $\overline{\mathbb{Z}}_{n}(g)$ and the Cram\`er-Wold
device.
As for the stochastic equicontinuity (b) of $\overline{Z}_{n}(g)$,  it
is sufficient to show that, for every $\varepsilon >0,$
\begin{equation*}
\lim_{\delta \rightarrow 0}\lim \sup_{n\rightarrow \infty }\PP\left\{
\sup_{\left\Vert h\right\Vert _{L_{1}(F_{0})}\leq \delta }\left|\int h(t)\, {\rm d}\mathbb{
Z}_{n}(t)\right|>\varepsilon \right\} =0,
\end{equation*}
where the supremum is taken over all differences $h=g-g{^{\prime }}$ with $
g,g^{\prime }\in \mathcal{G}$ and $\left\Vert h\right\Vert
_{L_{1}(F_{0})}=\int |h|dF_{0}\leq \delta .$ Since $h$ is also right-continuous and $\left\Vert h\right\Vert _{TV}\leq 2T$, Corollary 11 implies
that 
\begin{equation*}
\sup_{\left\Vert h\right\Vert _{L_{1}(F_{0})}\leq \delta }\left|\int h(t)\, {\rm d} 
\mathbb{Z}_{n}(t)\right|\leq (2\sqrt{\delta }+12T)\Psi _{2\sqrt{\delta },F_{0}}(\mathbb{Z}_{n})+{\mathbb{g}}_{\sqrt{\delta },F_{0}}(\mathbb{Z}_{n})\sqrt{\delta }.
\end{equation*}
Let $f_{t}(x)=1\{x\leq t\},$ so that $\mathbb{Z}_{n}(t)=\overline{\mathbb{Z}}_{n}(f_{t})$ and 
\begin{equation*}
d(s,t)=|F_{0}(s)-F_{0}(t)|
\end{equation*}
and observe that 
\begin{equation*}
\sup_{d(s,t)\leq \delta }|\mathbb{Z}_{n}(s)-\mathbb{Z}_{n}(t)|=\Psi _{\delta
,F_{0}}(\mathbb{Z}_{n}).
\end{equation*}%
The weak convergence of $\mathbb{Z}_{n}(t)$, or equivalently, $\overline{
\mathbb{Z}}_{n}(f_{t})$, to a continuous (with respect to $d(.,.)$) process $
\mathbb{Z}(t)$ implies that
\begin{equation*}
\Psi _{2\sqrt{\delta },F_{0}}(\mathbb{Z}_{n})\overset{P}{\rightarrow }0\text{
as }\delta \rightarrow 0\text{ and }n\rightarrow \infty .
\end{equation*}%
Moreover, the weak convergence of $\mathbb{Z}_{n}(t)$ implies that ${\mathbb{
g}}_{\sqrt{\delta },F_{0}}(\mathbb{Z}_{n})$ is bounded in probability, so ${%
\mathbb{g}}_{\sqrt{\delta },F_{0}}(\mathbb{Z}_{n})\sqrt{\delta }\rightarrow
0 $ as $\delta \rightarrow 0$ and $n\rightarrow \infty .$

\bigskip

Summarizing,   $\bar{\mathbb{Z}}_{n}(g),$   converges for each $
g\in \mathcal{G}$ to a Gaussian limit and $\bar{\ZZ}_n$ is uniformly $L_{1}(F_{0})$-equicontinuous, in probability. Moreover, $(\mathcal{G},L_{1}(F_{0}))$ is
totally bounded (for any distribution $F_{0})$. This well-known fact can be found in Example 2.6.21 in Van der Vaart and Wellner (1996, page 149).
It implies the weak convergence
of $\bar{\mathbb{Z}}_{n}(g)$ to a process with uniformly   $L_{1}(F_{0})-$continuous sample paths (see
Theorems 1.5.4 and 1.5.7 in Van der Vaart and Wellner 1996).

\bigskip

\subsection*{Proof of Theorem 6.}

Here we only prove the ``in probability" case. The almost sure statement follows after straightforward changes in the proof
of Theorem 1.  A simple modification of Lemma 9 yields%
\begin{equation*}
\int g(t)\, {\rm d}\mathbb{G}_{n}^{\ast }(t)=T_{1}(\mathbb{G}_{n}^{\ast })+T_{2}(%
\mathbb{G}_{n}^{\ast })
\end{equation*}%
for the same operators $T_{1}$ and $T_{2}$ as defined in Lemma 9. Since $%
\mathbb{G}_{n}^{\ast }$ converges to a Gaussian limit the finite dimensional
convergence follows by repeating the computation presented in the proof of
Lemma 9 after replacing $\mathbb{G}_{n}(t)$ with $\mathbb{G}_{n}^{\ast }(t)$.
As for stochastic equicontinuity of $\mathbb{Z}_{n}^{\ast }(g)$ we find,
analogous to Corollary 11, that for $\delta >0$%
\begin{equation*}
\sup_{\Vert g\Vert _{L_{p}(F_{0})}\leq \delta ,\ \Vert g\Vert _{TV}\leq
T}\left\vert \int g\,\mathrm{d}\mathbb{G}_{n}^{\ast }\right\vert \leq (2%
\sqrt{\delta }+12T)\Psi _{2\sqrt{\delta },F_{0}}(\mathbb{G}_{n}^{\ast })+{
\mathbb{g}}_{\sqrt{\delta },F_{0}}(\mathbb{G}_{n}^{\ast })\sqrt{\delta }.
\end{equation*}
Now the weak convergence of $\mathbb{Z}_{n}^{\ast }$ follows from the
convergence of $\mathbb{G}_{n}^{\ast }$, as in the proof of Theorem 1, conditionally given the sample $X_1,\ldots,X_n$.
Moreover, if $\mathbb{G}_{n}(t)$ and $\mathbb{G}_{n}^{\ast }(t)$ converge to
the same Gaussian process, then Lemma 9 coupled with the Cram\`er-Wold device
implies that the finite dimensional distribution of the limiting process of $
\mathbb{Z}_{n}(g)$ and $\mathbb{Z}_{n}^{\ast }(g)$ are the same. This
concludes the proof.

\bigskip
\appendix
\section{Appendix}

For completeness, we state the following classical result and give a simple
elementary proof which was communicated to us by David Pollard.
Theorem 18.4 in Billingsley (1986) states the result for functions on a bounded interval, yet its proof can be easily extended to the entire real line.\\

\noindent{\bf Lemma A.}
{\em
Let $f$  and $g$ be
right-continuous functions of bounded variation and define measures $\mu $ and  $\nu$ as  $\mu (-\infty ,x]=f(x)-f(-\infty )$ 
and $\nu (-\infty ,y]=g(y)-g(-\infty )$.
 Then 
\begin{equation*}
\int f(x)\,\mathrm{d}g(x)+\int g(x)\,\mathrm{d}f(x)=(fg)(\infty
)-(fg)(-\infty )+\int \int 1_{x=y}\, {\rm d} \mu(x) \,\mathrm{d}\nu(y).
\end{equation*}
Moreover, if either $f(\pm \infty )=0$ or $g(\pm \infty
)=0$, then
\begin{equation*}
\int f(x)\,\mathrm{d}g(x)+\int g(x)\,\mathrm{d}f(x)=\int \int 1_{x=y}\,
\mathrm{d}\mu \,\mathrm{d}\nu .
\end{equation*}
}
\begin{proof}
Set $H(x,y)=1\{x\leq y\}$ and observe that by the very
definition of Lebesgue integral 
\begin{eqnarray*}
f(y) &=&\int H(x,y)\, {\rm d}\mu (x)+f(-\infty )\end{eqnarray*}
and
\begin{eqnarray*}
g(x)&=&\int H(y,x)\, {\rm d}\nu (y)+g(-\infty ).
\end{eqnarray*} Hence 
\begin{eqnarray*}
&&\int f(y)\, {\rm d}\nu (y)+\int g(x)\, {\rm d}\mu (x)
\\
&&=\int \left( \int H(x,y)\, {\rm d}\mu (x)\right) \, {\rm d}\nu (y)+\int \left( \int H(y,x)\, {\rm d}\nu
(y)\right) \, {\rm d}\mu (x)\\
&&\quad+f(-\infty )(g(\infty )-g(-\infty ))+g(-\infty )(f(\infty )-f(-\infty ))
\end{eqnarray*}%
Next we apply Fubini
\begin{eqnarray*}
&&\int \left( \int H(x,y)\, {\rm d}\mu (x)\right) \, {\rm d}\nu (y)+\int \left( \int
H(y,x)\, {\rm d} \nu(y)\right) \, {\rm d}\mu (x)
\\
&&
=\int \int (H(x,y)+H(y,x))\, {\rm d}\mu (x)\, {\rm d}\nu (y)=\int \int (1_{x\leq y}+1_{y\leq
x})\, {\rm d}\mu (x)\, {\rm d}\nu (y)
\\
&&
=\int \int \, {\rm d}\mu (x)\, {\rm d}\nu (y)+\int \int 1_{x=y}\, {\rm d}\mu (x)\, {\rm d}\nu (y)
\end{eqnarray*}
to prove the lemma.
\end{proof}
 \bigskip
\subsection*{Acknowledgements.}
We would like to thank both referees for their careful reading and  many constructive remarks.\\
The research of Wegkamp was supported in part by NSF grants DMS-1310119 and DMS-1712709.\\

\end{document}